\numberwithin{equation}{section}
\def\my_c{c_\infty}
\newcommand{\mynewtheorem}[2]{
  \newaliascnt{#1}{dummy}
  \newtheorem{#1}[#1]{#2}
  \aliascntresetthe{#1}
  \expandafter\def\csname #1autorefname\endcsname{#2}
}
\newcommand{\dcb}{\begin{array}{lll}}
\newcommand{\dce}{\end{array}}
\newcommand{\ebe}{\begin{enumerate}\setlength{\baselineskip}{13pt}\setlength{\parskip}{0pt}}
\newcommand{\dbe}{\end{enumerate}}
\DeclareMathOperator*{\argsup}{argsup}
\def\specindent{\global\hangindent=2em \global\hangafter=-1 \global\prevgraf=0 }
\newcommand*{\inlineequation}[2][]{%
  \begingroup
    \refstepcounter{align}%
    \ifx\\#1\\%
    \else
      \label{#1}%
    \fi
    \relpenalty=100 %
    \binoppenalty= 100 %
    \ensuremath{%
      #2%
    }%
    ~\@eqnnum
  \endgroup
}
\newtheorem{definition}{Definition}[section]
\newtheorem{theorem}{Theorem}[section]
\newtheorem{proposition}{Proposition}[section]
\newtheorem{lemme}{Lemma}[section]
\newtheorem{corollary}{Corollary}[section]
\newtheorem{remark}{Remark}[section]
\newtheorem{example}{Example}[section]
\date{}
\begin{document}

\title{
 High order weak approximation of Stochastic Differential Equations for bounded and measurable test functions
}

\maketitle

\begin{center}
Cl\'ement Rey 
\footnote{address: CMAP, École Polytechnique, Institut Polytechnique de Paris, Route de Saclay, 91120 Palaiseau, France\\
 e-mail: clement.rey@polytechnique.edu}
\end{center}

\begin{abstract}
We present a method for approximating solutions of Stochastic Differential Equations (SDEs) with arbitrary rates. This approximation is derived for bounded and measurable test functions. Specifically, we demonstrate that, leveraging the standard weak approximation properties of numerical schemes for smooth test functions—such as first-order weak convergence for the Euler scheme—we can achieve convergence for simply bounded and measurable test functions at any desired rate by constructing a tailored approximation for the semigroup of the SDE. This is achieved by evaluating the scheme (e.g., Euler) on a random time grid. To establish convergence, we exploit the regularization properties of the scheme, which hold under a weak uniform Hörmander condition.

\end{abstract}

\noindent {\bf Keywords :} Monte Carlo methods, Numerical methods for SDE, Discrete time Markov processes, H\"ormander properties, Limit theorems for bounded measurable test functions. \\
{\bf AMS MSC 2020:} 60J05, 65C05,65C20,35H10, 60F17.

\tableofcontents{}

\section{Introduction}
In this article, our focus lies on the weak approximation for bounded measurable test functions of a $\mathbb{R}^{d}$-valued ($d \in \mathbb{N}^{\ast}$) random variable $X_{T}$, $T \geqslant 0$ where $(X_{t})_{t \geqslant 0}$ is a solution to the inhomogeneous Stochastic Differential Equation (SDE)
\begin{align}
\label{eq:SDE_inv}
X_{t}=X_{0}+\int_{0}^{t} V_{0}(X_{s},s) \mbox{d}s+ \sum_{i=1}^{N} \int_{0}^{t} V_{i}(X_{s},s) \mbox{d}W^{i}_{s}, \quad t \geqslant 0,X_{0}\in \mathbb{R}^{d},
\end{align}
where $((W^{i}_{t})_{t \geqslant 0}, i \in \{1,\ldots,N\})$ are $N \in \mathbb{N}^{\ast}$ independent $\mathbb{R}$-valued standard Brownian motions and $V_{j} \in \mathcal{C}^{\infty}_{b}(\mathbb{R}^{d};\mathbb{R}^{d})$, $j \in \{0,\ldots,d\}$.  When for every $j \in \{1,\ldots,d\}$, $V_{j}$ does not depend on the time, $i.e.$ its second variable, the SDE is termed homogeneous.\\

More precisely, for any chosen $\nu>0$, $T>0$ and sufficiently large $n \in \mathbb{N}^{\ast}$, we demonstrate that for an approximation functional operator $\hat{Q}^{\nu,\frac{T}{n}}_{0,T}$ which will be made explicit later, there exists $C>0$ such that for every measurable and bounded function $f$,
\begin{align}
\label{eq:intro_TV}
\sup_{x \in \mathbb{R}^{d}} \vert \mathbb{E}[f(X_{T}) \vert X_{0}=x]- \hat{Q}^{\nu,\frac{T}{n}}_{0,T}f(x)\vert \leqslant  \frac{C}{n^{\nu}} \Vert f \Vert_{\infty},
\end{align}
where $\Vert f \Vert_{\infty}=\sup_{x \in \mathbb{R}^{d}} \vert f(x) \vert$.  We recall that for $\mu_{1}$ and $\mu_{2}$ two probability measures on $\mathbb{R}^{d}$, the total variation distance between $\mu_{1}$ and $\mu_{2}$ is given by
\begin{align*}
d_{TV}(\mu_{1} , \mu_{2} ) =  \sup_{A \in \mathcal{B}(\mathbb{R}^{d})} \vert \mu_{1}(A) - \mu_{2}(A) \vert  = & \sup_{f \in \mathcal{M}(\mathbb{R}^{d};\mathbb{R}), \Vert f \Vert_{\infty} \leqslant 1} \frac{1}{2} \vert \mu_{1}(f) - \mu_{2}(f) \vert  \\
= & \sup_{f \in \mathcal{C}^{\infty}_{c}(\mathbb{R}^{d};\mathbb{R}), \Vert f \Vert_{\infty} \leqslant 1} \frac{1}{2} \vert \mu_{1}(f) - \mu_{2}(f) \vert  
\end{align*}
where $\mu_{1}(f) = \int_{\mathbb{R}^{d}} f(y) \mu_{1}( \mbox{d} y)$ and similarly for $\mu_{2}(f)$. The last equality above is a direct consequence of the Lusin's Theorem. In our case, given $x \in \mathbb{R}^{d}$, $\hat{Q}^{\nu,\frac{T}{n}}_{0,T}(x, \mbox{d} y)$ is not necessarily a probability measure (it is finite with $\hat{Q}^{\nu,\frac{T}{n}}_{0,T}(x, \mathbb{R}^{d})=1$ but not necessarily positive). Nevertheless, we will refer to (\ref{eq:intro_TV}), by abuse of language,  as, for every $x$, the total variation convergence of $\hat{Q}^{\nu,\frac{T}{n}}_{0,T}(x, \mbox{d} y)$ towards $P_{T}(x,\mbox{d}y)$ that is the probability measure of $X_{T}$ starting from $x$ at time 0.\\

The algorithm utilized to construct $\hat{Q}^{\nu,\frac{T}{n}}_{0,T}$ is adapted from the one that was presented in \cite{Alfonsi_Bally_2019} and is based on a combination of discrete time approximations $X^{\Pi_{h}}_{T}$ of (\ref{eq:SDE_inv}) which are built upon random time grids $\Pi_{h}$, $h \in \{1,\ldots,r\}$, $r\in \mathbb{N}^{\ast}$. More specifically, given $h$, we consider a sequence of
independent random variables $U^{h}_{j}\in \mathbb{R}^{N},\; j\in
\mathbb{N}^{\ast}$, and we assume that $U^{h}_{j}$, are centered with covariance one. We construct the $\mathbb{R}^{d}$-valued process $(X^{\Pi_{h}}_{t})_{t \in \pi^{\delta}}$ in the following way:%
\begin{align}
X^{\Pi_{h}}_{t_{j+1}^{\Pi_{h}}}=\psi(X^{\Pi_{h}}_{t_{j}^{\Pi_{h}}},t_{j}^{\Pi_{h}},(t_{j+1}^{\Pi_{h}}-t_{j}^{\Pi_{h}})^{\frac{1}{2}} U^{h}_{j+1}, t_{j+1}^{\Pi_{h}}-t_{j}^{\Pi_{h}}) , \quad t \in  \pi^{\delta}, \quad X^{\delta}_{0}=\mbox{\textsc{x}}^{\delta}_{0}\in \mathbb{R}^d \label{eq:schema_general_intro}
\end{align}%
for $\Pi_{h}=\{t_{j}^{\Pi_{h}},j \in \mathbb{N}\}$ with
\begin{align*}
\psi \in \mathcal{C}^{\infty }( \mathbb{R}^{d}\times \mathbb{R}_+\times \mathbb{R}^{N} \times [0,1];\mathbb{R}^{d})\quad \mbox{and} \quad  \forall (x,t) \in \mathbb{R}^{d} \times \pi^{\delta},\psi
(x,t,0,0)=x.  
\end{align*}
Considering for instance the Euler scheme of (\ref{eq:SDE_inv}), we have
\begin{align*}
\psi(x,t,z,y)=x+V_{0}(x,t)y+\sum_{i=1}^{N}V_{i}(x,t)z
\end{align*}

Moreover, $\Pi_{l}$ is independent from $((U^{h}_{j})_{j \in \mathbb{N}^{\ast}})_{h \in \{1,\ldots,r\}}$. The time $t_{j+1}^{\Pi_{h}}-t_{j}^{\Pi_{h}}$ between two successive discrete time values is chosen randomly in $\{ \frac{T}{n^{l}}, l\in \mathbb{N}\}$.  It is shown in \cite{Alfonsi_Bally_2019} that we can use the following representation: For every $x \in \mathbb{R}^{d}$,
\begin{align}
\label{eq:rep_formula_semigroup}
\hat{Q}^{\nu,\frac{T}{n}}_{0,T}f(x)= \sum_{h=1}^{r}c_{h} \mathbb{E}[f(X^{\Pi_{h}}_{T}) \vert X^{\Pi_{h}}_{0}=x]
\end{align}
where the value of $c_{h}\in \mathbb{R}$ and the law of $\Pi_{h}$ are given explicitly. This writting allows to compute $\hat{Q}^{\nu,\frac{T}{n}}_{0,T}f(x)$ by a Monte Carlo approach sampling $M \in \mathbb{N}^{\ast}$ independent realizations of $X^{\Pi_{h}}_{T}$, $h \in \{1,\ldots,r\}$. 

 In addition to the this representation (\ref{eq:rep_formula_semigroup}), the authors of \cite{Alfonsi_Bally_2019} proved that (\ref{eq:intro_TV}) hold but with $\Vert f \Vert_{\infty}$ replaced by $\Vert f \Vert_{\infty,K}$ the supremum norm of $f$ but also of its derivatives up to order $K \in \mathbb{N}^{\ast}$, $i.e.$
 \begin{align}
 \label{eq:intro_smooth}
\sup_{x \in \mathbb{R}^{d}} \vert \mathbb{E}[f(X_{T}) \vert X_{0}=x]- \hat{Q}^{\nu,\frac{T}{n}}_{0,T}f(x)\vert \leqslant  \frac{C}{n^{\nu}} \Vert f \Vert_{\infty,K}.
 \end{align}
Furthermore, the computational complexity for computing $\hat{Q}^{\nu,\frac{T}{n}}_{0,T}$ using $M$ Monte Carlo samples is of order $C_{rand}=M \times n \times r \times C(\nu)$ where $C(\nu)$ depends on $\nu$. Notice that a standard approach on deterministic time grid of size $\frac{T}{n}$ is $C_{det}=M \times n$ and then $C_{det}= \underset{n \to \infty}{\mathcal{O}}(C_{rand})$ underscoring the great numerical interest of this random grid approach as $\nu$ can be arbitrarily high.  \\
  
  The main interest of this method arise for big $\nu$. Actually, for small $\nu$, (\ref{eq:intro_smooth}) is usually obtained by a Lindeberg type approach relying on a smooth short time approximation of the form
  \begin{align}
  \label{eq:short_time_error_intro}
 \vert \mathbb{E}[f(X_{t+\delta})-f(\psi(x,t,\sqrt{\delta} U,\delta )) \vert X_{t}=x ]\vert  \leqslant C \Vert f\Vert _{\infty,\beta} \delta^{\alpha+1} 
\end{align}
 where $\alpha>0$ is referred in this paper as the weak smooth order of the scheme $\psi$. If $\alpha \geqslant \nu$, one can simply chose $\hat{Q}^{\nu,\frac{T}{n}}_{0,T}=Q^{\frac{T}{n}}_{0,T}$ where for every measurable $f$ and every $x \in \mathbb{R}^{d}$, $Q^{\frac{T}{n}}_{s,t}f(x)=\mathbb{E}[f(X^{\frac{T}{n}}_{t})\vert X^{\frac{T}{n}}_{s}=x ] $, $s,t \in \pi^{\frac{T}{n}}=\{ k \frac{T}{n}, k \in \mathbb{N} \}$ is the semigroup arising from the Markov process $X^{\frac{T}{n}}$ defined on the homogeneous deterministic time grid with time step $\frac{T}{n}$ (we use $X^{\frac{T}{n}}$ as short notation for $X^{\pi^{\frac{T}{n}}}$ defined as in (\ref{eq:schema_general_intro}) with $\Pi_{h}=\pi^{\frac{T}{n}}$). In the case of the Euler scheme we have $\alpha=1$ (see \cite{Talay_Tubaro_1991}), but various higher weak smooth order methods exists (see $e.g.$ \cite{Talay_1990}, \cite{Ninomiya_Victoir_2008}, \cite{Alfonsi_2010},\cite{Rey_2017}). However the value of $\alpha$ remains limited or requires high computational complexity to be increased. \\
 
 The algorithm in \cite{Alfonsi_Bally_2019} exploits (\ref{eq:short_time_error_intro}) on random time intervals and combine schemes on random time grids to build $\hat{Q}^{\nu,\frac{T}{n}}$ depending on $\alpha$ and $\nu$ and ensuring that (\ref{eq:intro_smooth}) holds. The combination of approximation method to  enhance convergence rates also appears in a large scope of method for the computation of $ \mathbb{E}[f(X_{T}) \vert X_{0}=x]$.  The Multi-Level Monte Carlo (see \cite{Heinrich_2001} and \cite{Giles_2008}), which extends the statistical Romberg \cite{Kebaier_2005}, exploits combination of scheme on different and specific time step to achieve high order approximation. In a similar way, the Richardson Romberg build high order approximation by taking advantage of development of the weak error (see $e.g.$ \cite{Pages_2007}). Actually, Multi Level Monte Carlo and Richardson Romberg methods can be combine for even greater efficiency as demonstrated in \cite{Lemaire_Pages_2017}.For more details about the methods related to those approaches, we refer to the non-exhaustive list of works \cite{McLeish_2011}, \cite{BenAlaya_Kebaier_2015}, \cite{Glynn_Rhee_2015},  \cite{AlGerbi_Jourdain_Clement_2016}, \cite{Vihola_2018}.\\

 Therefore, the aim of this article is to show that the result from \cite{Alfonsi_Bally_2019} (see (\ref{eq:intro_smooth})) is true with $K=0$.  When dealing with total variation distance for schemes on deterministic homogeneous time grids, some results already exists. Essentially, those results prove the convergence in total variation with rate $\frac{1}{n^{\alpha}}$ considering that (\ref{eq:short_time_error_intro}) is satisfied. In comparison to the case $K \in \mathbb{N}^{\ast}$, the proof of the total variation convergence requires regularization properties on the semigroups arising from $X$ or $X^{\frac{T}{n}}$, which are themselves obtained using ellitpic or H\"ormander type assumptions.
  For instance, \cite{Bally_Talay_1996_I} addresses the scenario where $\psi$ is the Euler scheme of an homogeneous SDE satisfying weak uniform H\"ormander property. They also propose an expansion of the error for bounded and measurable test functions which enable to use some Richardson Romberg methods. Regarding generic schemes (which encompass the Euler scheme but also many more buildings) of  inhomogeneous SDE which are simply specified by a transition function - such as $X^{\frac{T}{n}}$ -,  some results were also already established. In \cite{Bally_Rey_2015}, the uniform elliptic case is studied, while in \cite{Rey_2024}, the weak local H\"ormander case is addressed.  In this case, an additional hypothesis with form (\ref{eq:short_time_error_intro}) is needed to reach order $\alpha$ for the total variation convergence. It is worth noticing that it is shown in \cite{Rey_2024} that, even without assuming (\ref{eq:short_time_error_intro}), the convergence in total variation error of such generic schemes to $X_{T}$ happens with at least order $\frac{1}{n^{\frac{1}{2}-\epsilon}}$, for any $\epsilon>0$, under the same assumptions that guarantee regularization properties. \\
  
  However, the total variation order of convergence cannot go further standard weak smooth order $\alpha$ of the scheme given by  (\ref{eq:short_time_error_intro}).  As an illustration, the Euler scheme cannot converge faster that $\frac{1}{n}$ for the total variation distance and the only way to improve the convergence remains to increase the value of $n$. In this article we show how, without increasing $n$, the method outlined in \cite{Alfonsi_Bally_2019} can be applied to improve the total variation order of convergence to order $\frac{1}{n^{\nu}}$ for any $\nu$ even when considering only scheme of weak smooth order $\alpha$ when $\alpha<\nu$ ($e.g.$ $\alpha=1$ for the Euler scheme).\\
 
 In this paper,we do not discuss the representation formula (\ref{eq:rep_formula_semigroup}). We refer to an alternative equivalent representation, which only involves the discrete time semigroups
\begin{align*}
Q^{\frac{T}{n^{l}}}_{s,t}f(x)=\mathbb{E}[f(X^{\frac{T}{n^{l}}}_{t}) \vert X^{\frac{T}{n^{l}}}_{s} = x], \quad \forall s,t \in \pi^{\frac{T}{n^{l}}},
\end{align*}
where $X^{\frac{T}{n^{l}}}$ is a short notation for $X^{\pi^{\frac{T}{n^{l}}}}_{t}$ ($\pi^{\frac{T}{n^{l}}}=\{ k \frac{T}{n^{l}}, k \in \mathbb{N} \}$).  The aformentionned representation formula is given in (\ref{eq:schema_ordre_quelconque}).  First, we show that (\ref{eq:intro_TV}) can be obtained using an abstract Lindeberg inspired decomposition and assuming regularization properties on $Q^{\frac{T}{n^{l}}}_{s,t}$ (see Theorem \ref{th:intro_erreur_faible} and Theorem \ref{theo:distance_density}). Then, we propose sufficient assumption on the function $\psi$ and $((U^{h}_{j})_{j \in \mathbb{N}^{\ast}})_{h \in \{1,\ldots,r\}}$ such that the required regularization properties of $Q^{\frac{T}{n^{l}}}_{s,t}$ hold under a weak uniform type H\"ormander assumption. Those properties were demonstrated in \cite{Rey_2024} and are restated in our context in Proposition \ref{th:regul_main_result_intro}.\\

The article is organized as follows. Section 2 presents the abstract Lindeberg framework to derive (\ref{th:regul_main_result_intro}) from regularization properties on $Q^{\frac{T}{n^{l}}}_{s,t}$. The main results of this section are gathered in Theorem \ref{th:intro_erreur_faible} and Theorem \ref{theo:distance_density}. In Section 3, we state our main result of the article concerning total variation convergence under suitable hypothesis on $\psi$ and $((U^{h}_{j})_{j \in \mathbb{N}^{\ast}})_{h \in \{1,\ldots,r\}}$ which is given in Theorem \ref{th:main_result_psi_CVTV}.

\section{The distance between Semigroups}
\label{Section:The distance between two Markov semigroups}
Throughout this paper the following notations will prevail. We fix $T>0$ and $n \in \mathbb{N}^*$. For $l \in \mathbb{N}$ we will denote $\delta_{n}^{l}=T/n^{l}$ and for $\delta>0$ we consider the time grid $\pi^{\delta}:= \{k \delta, k \in \mathbb{N}\}$, with the convention $\pi^0=\mathbb{R}_+$. 

\subsection{Framework}\;\\
\textbf{Notations.} 
For $d \in \mathbb{N}^{\ast}$, denote by
\begin{itemize}
\item $\mathcal{M}_b(\mathbb{R}^d)$, the set of measurable and bounded functions from $\mathbb{R}^d$ to $\mathbb{R}$.
\item $\mathcal{C}^q(\mathbb{R}^d) $, $q \in \mathbb{N} \cup \{+\infty\}$, the set of functions from $\mathbb{R}^d$ to $\mathbb{R}$ which admit derivatives up to order $q$ and such that all those derivatives are continuous.
\item $\mathcal{C}_b^q(\mathbb{R}^d) $, $q \in \mathbb{N} \cup \{+\infty\}$, the set of functions from $\mathbb{R}^d$ to $\mathbb{R}$ which admit derivatives up to order $q$ and such that all those derivatives are bounded.
\item $\mathcal{C}_1^q(\mathbb{R}^d) $, $q \in \mathbb{N} \cup \{+\infty\}$, the set of functions from $\mathbb{R}^d$ to $\mathbb{R}$ which admit derivatives up to order $q$ and such that all those derivatives are bounded in $\mbox{L}_1\left(\mathbb{R}^d\right)$.
\item $\mathcal{C}_c^q(\mathbb{R}^d) $, $q \in \mathbb{N} \cup \{+\infty\}$, the set of functions from $\mathbb{R}^d$ to $\mathbb{R}$ defined on compact support and which admit derivatives up to order $q$.
\end{itemize}
For a multi-index $\gamma =(\gamma^{1},\cdots,\gamma^{d})\in \mathbb{N}^{d}$ we denote $\vert \gamma \vert
=\gamma^{1}+...+\gamma^{d}$ and if $f\in \mathcal{C}^{\vert \gamma \vert}(\mathbb{R}^{d})$,  we define $\partial^{\gamma}_{x}f=(\partial_1)^{\gamma^{1}} \ldots (\partial_{d})^{\gamma^{d}} f =\partial
_{x^{1}}^{\gamma^{1}}\ldots\partial_{x^{d}}^{\gamma^{d}}f(x).$\\

\textbf{Framework.} The approach we propose consists in building an approximation for a family of semigroups $\left(P^{\delta}_{t,s})_{t ,s \in \pi^{\delta} ;t \leqslant s} \right)_{\delta >0}$ where we suppose that this family is independent to the time-grid $\pi^{\delta}$ in the following sense

\begin{center}
For every $\delta,\overline{\delta}>0, t,s \in \pi^{\delta} \cap \pi^{\overline{\delta}}$, we have $P^{\delta}_{s,t}=P^{\overline{\delta}}_{s,t}=:P_{s,t}$. 
\end{center}
The crucial property satisfiedby $P$ is called the semigroup property and write : For every $s \leqslant u \leqslant t$, $P_{s,u} P_{u,t}= P_{s,t}$. At this point, we have in mind that $P$ may be the semigroup of an inhomogeneous Markov process $(X_{t})_{t \geqslant 0}$ solution to (\ref{eq:SDE_inv}), such that for every measurable function $f : \mathbb{R}^{d} \to \mathbb{R}$ and every $x \in \mathbb{R}^{d}$, $P_{s,t}f(x)=\mathbb{E}[f(X_{t}) \vert X_{s}=x]$. Now, given a value for $\delta$ we intrduce an approximation process $(X^{\delta})_{t \in \pi^{\delta}}$ for $(X_{t})_{t \geqslant 0}$ which is supposed to statfisfy the Markov property. In particular, denoting $Q^{\delta}_{s,t}=\mathbb{E}[f(X^{\delta}_{t}) \vert X^{\delta}_{s}=x]$, we have $Q^{\delta}_{s,u} Q^{\delta}_{u,t}= Q^{\delta}_{s,t}$ for every $s \leqslant u \leqslant t$.  Notice that $Q^{\delta}$ may be used directly to approximate $P$ but we are going a step further. Actually, the approximation we consider for $P$ will be a concatenation of some $Q^{\delta}$ but involving different possible values of $\delta \in \{\delta^{l}_{n}, l \in \mathbb{N}^{\ast} \}$.

\subsection{Approximation results}
\subsubsection{Arbitrary order weak approximation.}
We introduce the following assumptions
In a first step, we suppose that the semigroup we study is such that for every $r\in \mathbb{N}$, if $f\in
\mathcal{C}_b^{r}(\mathbb{R}^{d})$ then $P_{s,t}f\in \mathcal{C}_b^{r}(\mathbb{R}^{d})$ and%
\begin{align}
\sup_{t \geqslant s \geqslant 0}\Vert P_{s,t}f\Vert _{r,\infty }\leqslant C\Vert f\Vert _{r,\infty }.  
\label{hyp:transport_regularite_semigroup} 
\end{align}

The approximation of $P$ we consider in this paper is built from a family of discrete semigroups $Q:=\left( Q^{\delta_{n}^{l}} \right)_{l \in \mathbb{N}}=\left( ( Q^{\delta_{n}^{l}}_{s,t} )_{s,t \in \pi^{\delta_{n}^{l}};s \leqslant t} \right)_{l \in \mathbb{N}}$ such that for every $f\in
\mathcal{C}_b^{r}(\mathbb{R}^{d})$ then $Q^{\delta_{n}^{l}}_{s,t}f\in \mathcal{C}_b^{r}(\mathbb{R}^{d})$ and
\begin{align}
 \forall s,t \in \pi^{\delta_{n}^{l}}, s \leqslant t ,\quad  \Vert Q^{\delta_{n}^{l}}_{s,t}f\Vert
_{r,\infty}\leqslant C\Vert f\Vert _{r,\infty}. 
\label{hyp:transport_regularite_semigroup_approx} 
\end{align}

and which satisfy the short-time estimate, for every $r,\beta \in \mathbb{N}$, $\alpha>0$, and $f\in
\mathcal{C}_b^{\beta+r}(\mathbb{R}^{d})$
\begin{align}
E_n(l,\alpha,\beta,P,Q)  \qquad  \forall t \in \pi^{\delta_{n}^{l}} , \quad   \Vert P_{t,t+\delta_{n}^{l}}f-Q^{\delta_{n}^{l}}_{t,t+\delta_{n}^{l}}f \Vert_{\infty,r}   \leqslant C \Vert f\Vert _{\infty,\beta+r} \left( \delta_{n}^{l} \right)^{\alpha+1}.  \label{hyp:erreur_tems_cours_fonction_test_reg}
\end{align}

Using the family $\left( ( Q^{\delta_{n}^{l}}_{s,t} )_{s,t \in \pi^{\delta_{n}^{l}};s \leqslant t} \right)_{l \in \mathbb{N}}$, for every $(l,\nu) \in \mathbb{N}^2$, we are going to build $(\hat{Q}^{\nu,\delta_{n}^{l}}_{t,t+\delta_{n}^{l}})_{t \in \pi^{\delta_{n}^{l}}}$ as an approximation of $(P_{t,t+\delta_{n}^{l}})_{t \in \pi^{\delta_{n}^{l}}}$ which, under the hypothesis (\ref{hyp:transport_regularite_semigroup_approx}) and (\ref{hyp:erreur_tems_cours_fonction_test_reg}), satisfies, for every $r \in \mathbb{N}$, and $f\in
\mathcal{C}_b^{\kappa(l,\nu)+r}(\mathbb{R}^{d})$
\begin{align}
\hat{E}_n(l,\nu,\kappa,P,Q)  \qquad  \forall t \in \pi^{\delta_{n}^{l}} , \quad   \Vert P_{t,t+\delta_{n}^{l}}f-\hat{Q}^{\nu,\delta_{n}^{l}}_{t,t+\delta_{n}^{l}}f \Vert_{\infty,r}   \leqslant C \Vert f\Vert _{\infty,\kappa(l,\nu)+r} \frac{1+T^{\frac{\nu+1}{l+1}}}{n^{\nu}},
\label{hyp:erreur_tems_cours_fonction_test_regschema_nu} 
\end{align}
with 
\begin{align*}
\kappa(l,\nu)= \max \left\{ \beta m(l,v), \max_{i=1}^{m(l,v)-1} \left\{ i \kappa(l+1,q_{i}(l,v)) \right\} \right\}
\end{align*}
and
\begin{align*}
m\left( l,\nu \right) =& \left \lceil  \frac{\nu}{(1+\alpha)l + \alpha } \right \rceil \\
q_i\left( l,\nu \right) =& \nu + \lceil  i - (1+\alpha)(l+1)(i-1)  \rceil, \quad \forall i \in \{1,\ldots,m\left( l,\nu \right) - 1\}  .
\end{align*}
and $p(l,\nu) \geqslant 0$ is a positive constant depending on $l$, $\nu$ and $\alpha$. \\

 In particular $\hat{Q}^{\nu,\delta_{n}^{0}}_{0,\delta_{n}^{0}}$ is an approximation of $P_{T}$ with accuracy $1/n^{\nu}$. The approach we use was first introduced in \cite{Alfonsi_Bally_2019}. Among other, it was shown in this paper that, combined with a random grid approach, the accuracy $1/n^{\nu}$ can be reached with complexity - in terms of the number of simulations of random variables with law given by a semigroup $Q^{\delta_{n}^{l}}$ for some $l \in \mathbb{N}$ - of order $n$. We do not discuss the simulation of $\hat{Q}^{\nu,\delta_{n}^{0}}_{0,\delta_{n}^{0}}$ in this paper and we invite the reader to refer to \cite{Alfonsi_Bally_2019} for more details.
 

For every $t \in \pi^{\delta_{n}^{l}}$, we define $\hat{Q}^{\nu,\delta_{n}^{l}}$ in the following recursive way 

\begin{align}
\label{eq:schema_ordre_quelconque}
\hat{Q}^{\nu,\delta_{n}^{l}}_{t,t+\delta_{n}^{l}}=& Q^{\delta_{n}^{l+1}}_{t,t+\delta_{n}^{l}}+\sum_{i=1}^{m\left( l,\nu \right)-1} \hat{I}^{\delta_{n}^{l+1}}_{t,t+\delta_{n}^{l},i} ,
\end{align}
with 
\begin{align*}
\hat{I}^{\delta_{n}^{l+1}}_{t,t+\delta_{n}^{l},i} =&   \sum_{t=t_0 < t_1<\ldots<t_{i}\leqslant t+\delta_{n}^{l} \in \pi^{\delta_{n}^{l+1}}}  \prod_{j=1}^{i} Q^{\delta_{n}^{l+1}}_{t_{j-1},t_{j}-\delta_{n}^{l+1}}   \left( \left( \hat{Q}^{q_i\left( l,\nu \right),\delta_{n}^{l+1}}_{t_{j}-\delta_{n}^{l+1},t_{j}} - Q^{\delta_{n}^{l+1}}_{t_{j}-\delta_{n}^{l+1},t_{j}}   \right) \right) Q^{\delta_{n}^{l+1}}_{t_{i},t+\delta_{n}^{l}}  .
\end{align*}
Notice that the recursion ends when $m\left( l,\nu \right)=1$ and in this case $\hat{Q}^{\nu,\delta_{n}^{l}}_{t,t+\delta_{n}^{l}}= Q^{\delta_{n}^{l+1}}_{t,t+\delta_{n}^{l}}$. When $m\left( l,\nu \right)>1$, the recursion still ends for every $(l,\nu) \in \mathbb{N}^2$ and $\hat{Q}^{\nu,\delta_{n}^{l}}_{t,t+\delta_{n}^{l}}$ is well-defined. This is a direct consequence of Lemme 3.8 in \cite{Alfonsi_Bally_2019}.  We also invite the reader to refer to this article for the proof of $\hat{E}_n(l,\nu,\kappa,P,Q)$ for every $(l,\nu) \in \mathbb{N}^2$. In particular, $\hat{Q}^{\nu,\delta_{n}^{0}}_{0,\delta_{n}^{0}}=\hat{Q}^{\nu,\delta_{n}^{0}}_{0,T}$ is well defined, satisfies $\hat{E}(0,\nu,\kappa,P,Q)$ and may be built from the family $\left(( Q^{\delta_{n}^{l}}_{t})_{t \in \pi^{\delta_{n}^{l}}} \right)_{l \in \{1,\ldots,l(\nu,\alpha) \}}$ with $l(\nu,\alpha)=\lceil  \nu / \alpha  \rceil$

\subsubsection{Arbitrary order total variation converge.}Our purpose is to obtain a similar estimation as $\hat{E}(0,\nu,\kappa,P,Q)$ for $\Vert P_{T}f-\hat{Q}^{\nu,\delta_{n}^{0}}_{0,T}f \Vert_{\infty}$ but which remains valid for simply bounded and measurable test functions $f$. In other words we want to show that $\hat{E}(0,\nu,0,P,Q)$ holds. We will obtain such results using a dual approach. In particular, for a functional operator $Q$, we denote by $Q^{\ast}$ its dual operator for the scalar product in $\mbox{L}^{2}(\mathbb{R}^d)$ (\textit{i.e.} $%
\left\langle Qg,f\right\rangle_{\mbox{L}^{2}(\mathbb{R}^d)} =\left\langle
g,Q^{\ast}f\right\rangle_{\mbox{L}^{2}(\mathbb{R}^d)} $). Our approach requires to introduce some additional assumptions concerning our discrete semigroups. A first step is to consider a dual version of (\ref{hyp:transport_regularite_semigroup}) and (\ref{hyp:transport_regularite_semigroup_approx}). We assume that for every $f\in
\mathcal{C}_1^{r}(\mathbb{R}^{d})$, then $P^{\ast}_{s,t}f \in \mathcal{C}_1^{r}(\mathbb{R}^{d})$ and
\begin{align}
\sup_{t \geqslant s \geqslant 0}\Vert P^{\ast}_{s,t}f\Vert _{r,1 }\leqslant C\Vert f\Vert _{r,1}.  
\label{hyp:transport_regularite_semigroup_dual} 
\end{align}
and for the family of semigroups $Q=\left( \left( Q^{\delta_{n}^{l}}_{s,t} \right)_{s,t \in \pi^{\delta_{n}^{l}};s \leqslant t} \right)_{l \in \mathbb{N}}$,  $Q^{\delta_{n}^{l},\ast}_{s,t}f \in \mathcal{C}_1^{r}(\mathbb{R}^{d})$ and
\begin{align}
 \forall s,t \in \pi^{\delta_{n}^{l}}, s \leqslant t ,\quad  \Vert Q^{\delta_{n}^{l},\ast}_{s,t}f\Vert
_{r,1}\leqslant C\Vert f\Vert _{r,1}. 
\label{hyp:transport_regularite_semigroup_approx_dual} 
\end{align}
Moreover, we assume that the following dual estimate of the error in short time holds: for every $r \in \mathbb{N}$, and $f\in
\mathcal{C}_1^{\beta+r}(\mathbb{R}^{d})$
%

\begin{align}
E_n(l,\alpha,\beta,P,Q)^{\ast}  \qquad  \forall t \in \pi^{\delta_{n}^{l}} , \quad   \Vert P^{\ast}_{t,t+\delta_{n}^{l}}f-Q^{\delta_{n}^{l},\ast}_{t,t+\delta_{n}^{l}}f    \Vert_{r,1} \leqslant C\Vert f\Vert_{\beta+r,1}\left( \delta_{n}^{l} \right)^{\alpha+1}.  \label{hyp:erreur_tems_cours_fonction_test_reg_dual}
\end{align}

At this point, we notice that using the same approach as in \cite{Alfonsi_Bally_2019}, we can derive from (\ref{hyp:transport_regularite_semigroup_dual}), (\ref{hyp:transport_regularite_semigroup_approx_dual}) and (\ref{hyp:erreur_tems_cours_fonction_test_reg_dual}),  that for every $(l,\nu) \in \mathbb{N}^{\ast}$, and $f\in
\mathcal{C}^{\kappa(l,\nu)+r}(\mathbb{R}^{d})$
\begin{align}
\hat{E}_n(l,\nu,\kappa,P,Q)^{\ast}  \qquad  \forall t \in \pi^{\delta_{n}^{l}} , \quad   \Vert P^{\ast}_{t,t+\delta_{n}^{l}}f-\hat{Q}^{\nu,\delta_{n}^{l},\ast}_{t,t+\delta_{n}^{l}}f    \Vert_{r,1} \leqslant C \Vert f\Vert_{\kappa(l,\nu)+r,1} \frac{1+T^{p(l,\nu)}}{n^{\nu}}.  \label{hyp:erreur_tems_cours_fonction_test_reg_schema_nu}
\end{align}
where $p(l,\nu) \geqslant 0$ is a positive constant depending on $l$, $\nu$ and $\alpha$.\\

Now, we introduce some regularization properties that will be necessary to obtain total variation convergence. In concrete applications, the property is not necessarily satisfied by the discrete semigroup $Q^{\delta}$ but by a family of functional operators close enough in total variation distance. We call this family, a modification of $Q^{\delta}$ and it is not necessarily a semigroup. Hence, this hypothesis is expressed not only for discrete semigroups but for discrete family of functional operators. \\

Let $ \; q \in \mathbb{N}$ and $\eta \geqslant 0$ be fixed. For $\delta>0$, let $\left(Q ^{\delta}_{s,t}\right)_{s,t \in \pi^{\delta} ,t>s}$, be a family of functional operators. We consider the following regularization property : \\

For every $r \in \mathbb{N}$ and every multi-index $\gamma $ with $\vert \gamma \vert
+r \leqslant q$, and $f \in \mathcal{C}_{b}^{\infty}(\mathbb{R}^{d})$, then
\begin{align}
R_{q,\eta }(Q^{\delta})  \qquad  \forall s,t \in \pi^{\delta} ,t>s, \quad  \Vert Q^{\delta}_{s,t} \partial_{x}^{\gamma}f\Vert
_{r,\infty}\leqslant \frac{C}{(t-s)^{\eta}}\Vert f\Vert _{\infty}.  
\label{hyp:reg_forte}
\end{align}

In our approach we will not necessarily use directly (\ref{hyp:reg_forte}) but an estimate it implies on the adjoint semigroup. Actually, remembering that $\Vert f \Vert_1 \leqslant \sup_{g \in \mathcal{M}_{b}, \Vert g \Vert_{\infty}=1} \langle g,f \rangle$, we notice that $R_{q,\eta }(Q^{\delta})$ implies that for every $r \in \mathbb{N}$ and every multi-index $\gamma $ with $\vert \gamma \vert
+r \leqslant q$ and $f\in
\mathcal{C}^{\infty}(\mathbb{R}^{d}) \cap \mbox{L}^{1}(\mathbb{R}^{d})$
\begin{align}
\label{hyp:reg_forte_dual}
\forall t \in \pi^{\delta_{n}^{l}} ,t>s, \quad  \Vert Q^{\delta,\ast}_{s,t} \partial_{x}^{\gamma}f\Vert
_{r,1}\leqslant \frac{C}{(t-s)^{\eta}}\Vert f\Vert _{1}. 
\end{align}

Using those hypothesis, we can derive the following total variation convergence towards the semigroup $P$ with rate $1/n^{\nu}$ with $\nu$ choosen arbitrarily in $\mathbb{N}$.

\begin{theorem}
\label{th:intro_erreur_faible}
We recall that $T>0$ and $n \in \mathbb{N}^{\ast}$. Let $\nu >0$ and $\eta \geqslant 0$ and let us define $q_{\nu}=\max_{i\in \{1,\ldots,m(0,\nu) \}}(i\max(\beta,\kappa(1,q_i(\nu,0)))$. \\
 
 Assume that (\ref{hyp:transport_regularite_semigroup}), (\ref{hyp:transport_regularite_semigroup_approx}), (\ref{hyp:transport_regularite_semigroup_dual}) and (\ref{hyp:transport_regularite_semigroup_approx_dual}) hold and that the short time estimates $E_n(l,\alpha,\beta,P,Q)$ (see (\ref{hyp:erreur_tems_cours_fonction_test_reg})), and $E_n(l,\alpha,\beta,P,Q)^{\ast}$ (see (\ref{hyp:erreur_tems_cours_fonction_test_reg_dual})) hold for every $l \in \{1,\ldots,l(\nu,\alpha)\}$. Moreover, assume that $R_{q_{\nu},\eta }(Q^{\delta_{n}^{1}})$ and $R_{q_{\nu},\eta}(P)$ (see \ref{hyp:reg_forte}) hold. Then, for every $f \in \mathcal{M}_b(\mathbb{R}^d)$,
\begin{align*}
\Vert P_{T}f-\hat{Q}^{\nu,\delta_{n}^{0}}_{0,T}f \Vert_{\infty }\leqslant \frac{1}{n^{\nu}}\Vert f \Vert _{\infty }  \frac{C(1+ T^{p})}{T(\nu)^{\eta}}  .  
\end{align*}
with $T(\nu)=\inf \left\{t \in \pi^{\delta_{n}^{1}},t \geqslant T\frac{n-m(0,v)}{n(m(0,v)+1)} \right\}$ and $p$ which depends on $\nu$ and $\alpha$.
\end{theorem}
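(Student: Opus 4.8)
The plan is to bootstrap the already-available smooth–test–function estimate $\hat{E}_n(0,\nu,\kappa,P,Q)$ of (\ref{hyp:erreur_tems_cours_fonction_test_regschema_nu}) — granted by \cite{Alfonsi_Bally_2019} under (\ref{hyp:transport_regularite_semigroup}), (\ref{hyp:transport_regularite_semigroup_approx}) and the short-time estimates $E_n(l,\alpha,\beta,P,Q)$ for $l\leqslant l(\nu,\alpha)$ — to merely bounded measurable $f$, by inserting a regularizing time block of length of order $T(\nu)$ and charging it with a factor $T(\nu)^{-\eta}$. Indeed, $R_{q_\nu,\eta}(P)$ and $R_{q_\nu,\eta}(Q^{\delta_n^1})$ assert that propagating $\mathcal{M}_b(\mathbb{R}^d)$ through $P$ (resp.\ $Q^{\delta_n^1}$) over an interval of length $\tau$ produces a function in $\mathcal{C}_b^{q_\nu}(\mathbb{R}^d)$ with $\Vert\cdot\Vert_{q_\nu,\infty}\leqslant C\tau^{-\eta}\Vert\cdot\Vert_\infty$; since $q_\nu\geqslant\kappa(0,\nu)$ by construction, the known estimate then applies to the regularized function. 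Because $\Vert\cdot\Vert_\infty$ is dual to the $\mbox{L}^1$-norm, it is convenient to carry the whole argument on the adjoint side: for fixed $x$, $(P_Tf-\hat{Q}^{\nu,\delta_n^0}_{0,T}f)(x)=\langle(P^{\ast}_{0,T}-\hat{Q}^{\nu,\delta_n^0,\ast}_{0,T})\delta_x,f\rangle$, so it suffices to bound $\Vert(P^{\ast}_{0,T}-\hat{Q}^{\nu,\delta_n^0,\ast}_{0,T})\delta_x\Vert_1$ by $\tfrac{C(1+T^p)}{n^\nu}\,T(\nu)^{-\eta}$ uniformly in $x$. This is where the dual hypotheses (\ref{hyp:transport_regularite_semigroup_dual}), (\ref{hyp:transport_regularite_semigroup_approx_dual}), (\ref{hyp:erreur_tems_cours_fonction_test_reg_dual}), the dual regularization (\ref{hyp:reg_forte_dual}), and the dual scheme estimate $\hat{E}_n(\cdot,\cdot,\cdot,P,Q)^{\ast}$ of (\ref{hyp:erreur_tems_cours_fonction_test_reg_schema_nu}) come in.

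Next I would perform the Lindeberg-type splitting. Fix the grid point $\tau=T(\nu)\in\pi^{\delta_n^1}$ and, using the recursive formula (\ref{eq:schema_ordre_quelconque}), split every term in the expansion of $\hat{Q}^{\nu,\delta_n^0}_{0,T}$ according to the position of its correction slots $[t_j-\delta_n^1,t_j]$ relative to $\tau$, cutting any plain factor $Q^{\delta_n^1}$ that straddles $\tau$. This rewrites $\hat{Q}^{\nu,\delta_n^0}_{0,T}$ as a finite combination of products $\hat{A}_{0,\tau}\,\hat{B}_{\tau,T}$, where $\hat{A}_{0,\tau}$ (resp.\ $\hat{B}_{\tau,T}$) is of the same Alfonsi--Bally form on $[0,\tau]$ (resp.\ on $[\tau,T]$) and the total number of correction slots on the two sides stays $\leqslant m(0,\nu)-1$. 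A telescoping identity then gives, schematically,
\begin{align*}
P_{0,T}f-\hat{Q}^{\nu,\delta_n^0}_{0,T}f=\big(P_{0,\tau}-\hat{A}_{0,\tau}\big)P_{\tau,T}f+\hat{A}_{0,\tau}\big(P_{\tau,T}f-\hat{B}_{\tau,T}f\big)+\mathcal{R}_nf ,
\end{align*}
with $\mathcal{R}_n$ collecting the finitely many truncation remainders. In the first term, $P_{\tau,T}f\in\mathcal{C}_b^{q_\nu}(\mathbb{R}^d)$ with $\Vert P_{\tau,T}f\Vert_{q_\nu,\infty}\leqslant C(T-\tau)^{-\eta}\Vert f\Vert_\infty$ by $R_{q_\nu,\eta}(P)$, and $(P_{0,\tau}-\hat{A}_{0,\tau})$ satisfies a $1/n^\nu$ estimate of type (\ref{hyp:erreur_tems_cours_fonction_test_regschema_nu}) on $[0,\tau]$ involving at most $q_\nu$ derivatives; for the second term I would pass to adjoints, observe that $\hat{A}^{\ast}_{0,\tau}\delta_x$ is a sum of densities in $\mathcal{C}_1^{q_\nu}(\mathbb{R}^d)$ with $\Vert\cdot\Vert_{q_\nu,1}\leqslant C\,T(\nu)^{-\eta}$ — by $R_{q_\nu,\eta}(Q^{\delta_n^1})$ applied to the macroscopic plain $Q^{\delta_n^1}$-run which, thanks to the partition of $[0,T]$ into $m(0,\nu)+1$ runs, is present in every term of $\hat{A}_{0,\tau}$ — and then apply the dual estimate $\hat{E}_n(\cdot,\nu,\kappa,P,Q)^{\ast}$ to $(P^{\ast}_{\tau,T}-\hat{B}^{\ast}_{\tau,T})$. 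Summing and using $T-\tau\geqslant T(\nu)$, each contribution is $\leqslant\tfrac{C(1+T^p)}{n^\nu}\,T(\nu)^{-\eta}\Vert f\Vert_\infty$ with $p=p(\nu,\alpha)$, and the remainders $\mathcal{R}_nf$ are handled the same way.

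The main obstacle is precisely that $\hat{Q}^{\nu,\delta_n^0}_{0,T}$ does \emph{not} factor at $\tau$, so the display above is only a truncated product and one must: (i) verify that, in every term of the expansion of $\hat{Q}^{\nu,\delta_n^0}_{0,T}$, a macroscopic plain $Q^{\delta_n^1}$-run of length $\geqslant T(\nu)$ is available on one side of $\tau$ — this uses the bound $m(0,\nu)-1$ on the number of corrections together with the partition into $m(0,\nu)+1$ runs, and it is what fixes $T(\nu)=\inf\{t\in\pi^{\delta_n^1}:t\geqslant T\tfrac{n-m(0,\nu)}{n(m(0,\nu)+1)}\}$; (ii) check that $\hat{A}_{0,\tau}$, $\hat{B}_{\tau,T}$ and all truncation remainders recombine into operators covered by the smooth estimates, keeping track that a term with $i$ correction slots carries corrections of order $q_i(\cdot)$, so that the $i$ nested uses of (\ref{hyp:erreur_tems_cours_fonction_test_reg}) and (\ref{hyp:erreur_tems_cours_fonction_test_regschema_nu}) consume $i\,\max(\beta,\kappa(1,q_i(\cdot)))$ derivatives — which is exactly how $q_\nu$ is defined; and (iii) propagate the $(1+T^{p})$ factors through the recursion, as in \cite{Alfonsi_Bally_2019}. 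A few points remain routine: $R_{q_\nu,\eta}$ is stated for $f\in\mathcal{C}_b^\infty(\mathbb{R}^d)$ and is extended to $\mathcal{M}_b(\mathbb{R}^d)$ by density; and one checks that $P_{0,T}f$ and $\hat{Q}^{\nu,\delta_n^0}_{0,T}f$ are continuous, so that the supremum over $x$ coincides with the essential supremum used in the duality argument.
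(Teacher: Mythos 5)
Your proposal assembles the right ingredients — the pigeonhole argument behind $T(\nu)$, the role of the dual hypotheses (\ref{hyp:transport_regularite_semigroup_approx_dual}), (\ref{hyp:erreur_tems_cours_fonction_test_reg_dual}) and of the dual regularization (\ref{hyp:reg_forte_dual}), the duality with $\phi_{\varepsilon,x_0}$, and the Lusin extension — but the organizing decomposition you build the proof on does not work, and this is a genuine gap rather than a routine verification. You cut $\hat{Q}^{\nu,\delta_{n}^{0}}_{0,T}$ at a fixed grid time $\tau=T(\nu)$ and write the error as $(P_{0,\tau}-\hat{A}_{0,\tau})P_{\tau,T}f+\hat{A}_{0,\tau}(P_{\tau,T}f-\hat{B}_{\tau,T}f)+\mathcal{R}_nf$. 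First, the cut pieces are not objects to which the known estimates apply: $\hat{A}_{0,\tau}$ and $\hat{B}_{\tau,T}$ are not of "Alfonsi--Bally form" on the subintervals (the correction orders $q_i(0,\nu)$ and the slot count $m(0,\nu)$ are tuned to the whole interval, and the cut does not satisfy the recursion (\ref{eq:schema_ordre_quelconque})), and individually they are not $O(n^{-\nu})$ approximations of $P_{0,\tau}$, $P_{\tau,T}$ even on smooth functions — e.g.\ the plain leading term cuts to $Q^{\delta_{n}^{1}}_{0,\tau}$, and $P_{0,\tau}-Q^{\delta_{n}^{1}}_{0,\tau}$ is only of order $n^{-\alpha}$. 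The $n^{-\nu}$ smallness exists only after the full Lindeberg comparison in which each correction slot carries a difference $\Lambda\in\{P-Q^{\delta_{n}^{1}},\hat{Q}^{q_i}-P\}$; your two factors $(P_{0,\tau}-\hat{A}_{0,\tau})$ and $(P_{\tau,T}-\hat{B}_{\tau,T})$ do not inherit it. Second, your claim that $\hat{A}^{\ast}_{0,\tau}\delta_x$ is uniformly $\mathcal{C}_1^{q_\nu}$ with norm $CT(\nu)^{-\eta}$ fails for terms whose correction slots sit near time $0$: the regularizing plain run must be the \emph{leftmost} factor for $R_{q_\nu,\eta}(Q^{\delta_{n}^{1}})$ to smooth $\delta_x$ before the adjoint correction factors act, and nothing forces that. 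Third, and most structurally, generic terms have correction slots on \emph{both} sides of the long plain gap, so a single term needs primal short-time estimates for the slots left of the gap \emph{and} dual estimates for the slots right of it; a two-block split at a fixed $\tau$ can never deliver both simultaneously.

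The paper's proof avoids all of this by never cutting at a fixed time: it expands $P_{T}f-\hat{Q}^{\nu,\delta_{n}^{0}}_{0,T}f=\sum_{i}(I^{\delta_{n}^{1}}_{0,T,i}-\hat{I}^{\delta_{n}^{1}}_{0,T,i})+R^{\delta_{n}^{1}}_{0,T,m(0,\nu)}$, writes each term as a product of factors $\Lambda^{\delta_{n}^{1},h}$ interlaced with plain runs, locates in each term the \emph{term-dependent} largest gap $j_i$ (of length at least $T(\nu)$ by pigeonhole), uses the primal estimates $E_n$, $\hat{E}_n$ with (\ref{hyp:transport_regularite_semigroup_approx}) for the factors left of that gap, converts the remaining derivative norm into a pairing with $\partial_x^{\gamma}\phi_{\varepsilon,x_0}$, regularizes this across the gap via (\ref{hyp:reg_forte_dual}) to get the factor $T(\nu)^{-\eta}$, and finishes with the dual estimates $E_n^{\ast}$, $\hat{E}_n^{\ast}$ for the factors right of the gap; $R_{q_\nu,\eta}(P)$ is what handles the remainder term ending with $P_{t_m,T}$. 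If you carry out your points (i)--(ii) honestly you are forced into exactly this per-term three-block argument, and the fixed-$\tau$ telescoping you display should be dropped rather than patched.
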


\begin{proof} 
We  proceed by recurrence. Notice that, using the Lindeberg decomposition,  $\hat{E}_n(l,(l+1)(\alpha+1)-1,\kappa,P,Q)$ (see (\ref{hyp:erreur_tems_cours_fonction_test_regschema_nu})) and $\hat{E}(l,(l+1)(\alpha+1)-1,\kappa,P,Q)^{\ast}$ (see (\ref{hyp:erreur_tems_cours_fonction_test_reg_schema_nu})) hold. for any $l \in \mathbb{N}$. Now, let us assume that $\hat{E}_n(1,\nu+1,\kappa,P,Q)$ and $\hat{E}(1,\nu+1,\kappa,P,Q)^{\ast}$ hold.
In order to prove this result, we introduce a reprensentation for the semigroup $(P_{t})_{t \geqslant 0}$ which relies on the family of semigroup $\left( \left( Q^{\delta_{n}^{l}}_{t} \right)_{t \in \pi^{\delta_{n}^{l}}} \right)_{l \in \mathbb{N}}$. In particular we have, for every $t \in\pi^{\delta_{n}^{l}}$,
\begin{align*}
P_{t,t+\delta_{n}^{l}}=Q^{\delta_{n}^{l+1}}_{t,t+\delta_{n}^{l}}+\sum_{i=1}^{m\left( l,\nu \right)-1} I^{\delta_{n}^{l+1}}_{t,t+\delta_{n}^{l},i}+ R^{\delta_{n}^{l+1}}_{t,t+\delta_{n}^{l},m\left( l,\nu \right)}
\end{align*}
with 
\begin{align*}
I^{\delta_{n}^{l+1}}_{t,t+\delta_{n}^{l},i} =& \sum_{t=t_0 <\ldots<t_{i}\leqslant t+\delta_{n}^{l} \in \pi^{\delta_{n}^{l+1}}}  \left(  \prod_{j=1}^{i}Q^{\delta_{n}^{l+1}}_{t_{j-1},t_{j}-\delta_{n}^{l+1}}  \left( P_{t_{j}-\delta_{n}^{l+1},t_{j}} - Q^{\delta_{n}^{l+1}}_{t_{j}-\delta_{n}^{l+1},t_{j}}   \right)   \right)  Q^{\delta_{n}^{l+1}}_{t_{i},t+\delta_{n}^{l}} .
\end{align*}
and
\begin{align*}
R^{\delta_{n}^{l+1}}_{t,t+\delta_{n}^{l},m} =& \sum_{t=t_0 <\ldots<t_{m}\leqslant t+\delta_{n}^{l} \in \pi^{\delta_{n}^{l+1}}} \left(   \prod_{j=1}^{m} Q^{\delta_{n}^{l+1}}_{t_{j-1},t_{j}-\delta_{n}^{l+1}} \left( P_{t_{j}-\delta_{n}^{l+1},t_{j}} - Q^{\delta_{n}^{l+1}}_{t_{j}-\delta_{n}^{l+1},t_{j}}   \right)   \right)  P_{t_{m},t+\delta_{n}^{l}}.
\end{align*}

It follows that
\begin{align*}
P_{T}f-\hat{Q}^{\nu,\delta_{n}^{0}}_{0,T}= \sum_{i=1}^{m\left(0,\nu \right)-1} I^{\delta_{n}^{1}}_{0,T,i}-\hat{I}^{\delta_{n}^{1}}_{0,T,i}+ R^{\delta_{n}^{1}}_{0,T,m\left(0,\nu \right)}
\end{align*}
with

\begin{align*}
I^{\delta_{n}^{1}}_{0,T,i}-\hat{I}^{\delta_{n}^{1}}_{0,T,i}  = & \sum_{0=t_0 <\ldots<t_{i}\leqslant T \in \pi^{\delta_{n}^{1}}} \left(    \prod_{j=1}^{i} Q^{\delta_{n}^{1}}_{t_{j-1},t_{j}-\delta_{n}^{1}}  \left( P_{t_{j}-\delta_{n}^{1},t_{j}} - Q^{\delta_{n}^{1}}_{t_{j}-\delta_{n}^{1},t_{j}}   \right)  \right) Q^{\delta_{n}^{1}}_{t_{i},T} \\
&  -\sum_{0=t_0 <\ldots<t_{i}\leqslant T \in \pi^{\delta_{n}^{1}}} \left(   \prod_{j=1}^{i} Q^{\delta_{n}^{1}}_{t_{j-1},t_{j}-\delta_{n}^{1}} \left( P_{t_{j}-\delta_{n}^{1},t_{j}}   -Q^{\delta_{n}^{1}}_{t_{j}-\delta_{n}^{1},t_{j}}  +\hat{Q}^{q_i\left( 0,\nu \right),\delta_{n}^{1}}_{t_{j}-\delta_{n}^{1},t_{j}}- P_{t_{j}-\delta_{n}^{1},t_{j}}  \right)   \right) Q^{\delta_{n}^{1}}_{t_{i},T} 
\end{align*}

More particularly, we can write
\begin{align*}
I^{\delta_{n}^{1}}_{0,T,i}-\hat{I}^{\delta_{n}^{1}}_{0,T,i}=& -\sum_{0=t_0 <\ldots<t_{i}\leqslant T \in \pi^{\delta_{n}^{1}}} \sum_{h=1}^{2^i} \left(  \prod_{j=1}^{i} Q^{\delta_{n}^{1}}_{t_{j-1},t_{j}-\delta_{n}^{1}} \Lambda^{\delta_{n}^{1},h}_{t_{j}-\delta_{n}^{1},t_{j}}    \right)  Q^{\delta_{n}^{1}}_{t_{i},T}
\end{align*}
with, for every $j \in \{1,\ldots,i\}$, $ \Lambda^{\delta_{n}^{1},h}_{t_{j}-\delta_{n}^{1},t_{j}} \in   \left\{ P_{t_{j}-\delta_{n}^{1},t_{j}}-Q^{\delta_{n}^{1}}_{t_{j}-\delta_{n}^{1},t_{j}}, \hat{Q}^{q_i\left( 0,\nu \right),\delta_{n}^{1}}_{t_{j}-\delta_{n}^{1},t_{j}}-P_{t_{j}-\delta_{n}^{1},t_{j}}   \right\}$. Moreover, we notice that the case $\Lambda^{\delta_{n}^{1},h}_{t_{j}-\delta_{n}^{1},t_{j}}=P_{t_{j}-\delta_{n}^{1},t_{j}}- Q^{\delta_{n}^{1}}_{t_{j}-\delta_{n}^{1},t_{j}}$ for every $j \in \{1,\ldots,i\}$ is excluded.
Using this decomposition, it is is sufficient to prove that 
\begin{align*}
\left \Vert  \left(  \prod_{j=1}^{i}  Q^{\delta_{n}^{1}}_{t_{j-1},t_{j}-\delta_{n}^{1}}  \Lambda^{\delta_{n}^{1},h}_{t_{j}-\delta_{n}^{1},t_{j}}   \right) Q^{\delta_{n}^{1}}_{t_{i},T}  f \right \Vert _{\infty
}\leqslant \frac{C(1+T^{\nu})}{%
T(\nu)^{\eta}}\Vert f\Vert _{\infty } / n^{\nu+i},
\end{align*}
and, for the remainder,
\begin{align*}
\left \Vert   \left( \prod_{j=1}^{m\left( l,\nu \right)} Q^{\delta_{n}^{1}}_{t_{j-1},t_{j}-\delta_{n}^{1}} \ \left( P_{t_{j}-\delta_{n}^{1},t_{j}} - Q^{\delta_{n}^{1}}_{t_{j}-\delta_{n}^{1},t_{j}}   \right) \right)P_{t_{m\left( l,\nu \right)},T}  f \right \Vert _{\infty
}\leqslant \frac{C(1+T^{\nu})}{%
T(\nu)^{\eta}}\Vert f \Vert _{\infty } / n^{\nu+i}.
\end{align*}
We focus on the study of $\sum_{i=1}^{m\left(0,\nu \right)-1} I^{\delta_{n}^{1}}_{0,T,i}-\hat{I}^{\delta_{n}^{1}}_{0,T,i}$. The study of $R^{\delta_{n}^{1}}_{0,T,m\left(0,\nu \right)}$ is similar so we leave it out. 
 First we notice that, using the convention $t_{i+1}=T+\delta_{n}^{1}$, for ${j_i}=\argsup_{j\in \{1,\ldots,i+1\}} \{ t_{j}-\delta_{n}^{1}-t_{j-1} \}$, we have $t_{j_i}-\delta_{n}^{1}-t_{j_i-1}\geqslant T(\nu) \coloneqq \inf \left\{t \in \pi^{\delta_{n}^{1}},t \geqslant T\frac{n-m(0,v)}{n(m(0,v)+1)} \right\}$.


Let $f \in \mathcal{C}^{\infty}_{c}(\mathbb{R}^{d}$. Using succesively $E_n(1,\alpha,\beta,P,Q)$ (see (\ref{hyp:erreur_tems_cours_fonction_test_reg})) or  $\hat{E}_n(1,q_i(\nu,0),\kappa,P,Q)$ (see (\ref{hyp:erreur_tems_cours_fonction_test_regschema_nu})) with (\ref{hyp:transport_regularite_semigroup_approx}), it follows that


\begin{align*}
& \left \Vert  \left( \prod_{j=1}^{i}  Q^{\delta_{n}^{1}}_{t_{j-1},t_{j}-\delta_{n}^{1}} \Lambda^{\delta_{n}^{1},h}_{t_{j}-\delta_{n}^{1},t_{j}} \right)    Q^{\delta_{n}^{1}}_{t_{i},T} f \right \Vert _{\infty}\\
&= \left \Vert  \left( \prod_{j=1}^{j_i-1}   Q^{\delta_{n}^{1}}_{t_{j-1},t_{j}-\delta_{n}^{1}} \Lambda^{\delta_{n}^{1},h}_{t_{j}-\delta_{n}^{1},t_{j}} \right) \left(  Q^{\delta_{n}^{1}}_{t_{j_i-1},t_{j_i}-\delta_{n}^{1}}   \Lambda^{\delta_{n}^{1},h}_{t_{j_i}-\delta_{n}^{1},t_{j_i}} \right) \left(    \prod_{j=j_i+1}^{i} Q^{\delta_{n}^{1}}_{t_{j-1},t_{j}-\delta_{n}^{1}} \Lambda^{\delta_{n}^{1},h}_{t_{j}-\delta_{n}^{1},t_{j}} \right)    Q^{\delta_{n}^{1}}_{t_{i},T}   f \right \Vert _{\infty
} \\ 
&\leqslant  C \left \Vert    \left(  Q^{\delta_{n}^{1}}_{t_{j_i-1},t_{j_i}-\delta_{n}^{1}}   \Lambda^{\delta_{n}^{1},h}_{t_{j_i}-\delta_{n}^{1},t_{j_i}} \right) \left(    \prod_{j=j_i+1}^{i} Q^{\delta_{n}^{1}}_{t_{j-1},t_{j}-\delta_{n}^{1}} \Lambda^{\delta_{n}^{1},h}_{t_{j}-\delta_{n}^{1},t_{j}} \right)    Q^{\delta_{n}^{1}}_{t_{i},T}    f \right \Vert _{(j_i-1) \max(\beta,\kappa(1,q_i(\nu,0))),\infty } \frac{1+T^{p(\nu,\alpha)}}{n^{\sum_{j=1}^{j_i-1} q_i^h(j)}},
\end{align*}

with $q_i^h(j)=q_i(0,\nu)$ if $ \Lambda^{\delta_{n}^{1},h}_{t_{j}-\delta_{n}^{1},t_{j}} =  \hat{Q}^{q_i\left( 0,\nu \right),\delta_{n}^{1}}_{t_{j}-\delta_{n}^{1},t_{j}} - P_{t_{j}-\delta_{n}^{1},t_{j}}$ and  $q_i^h(j)=\alpha+1$ if $ \Lambda^{\delta_{n}^{1},h}_{t_{j}-\delta_{n}^{1},t_{j}} =P_{t_{j}-\delta_{n}^{1},t_{j}}- Q^{\delta_{n}^{1}}_{t_{j}-\delta_{n}^{1},t_{j}}$ and $p(\nu,\alpha)$ is a constant depending on $\nu$ and $\alpha$ (and which may change value in the following lines or according to $i$). Notice that it is not possible to have $q_i^h(j) = \alpha+1$ for every $j \in \{1,\ldots,i\}$ and that there exists $\hat{j}_i \in \{1,\ldots,i\}$, such that
\begin{align*}
\sum_{j=1}^{i} q_i^h(j)=\hat{j}_i q_i(0,\nu)+(i-\hat{j}_i)(\alpha+1) \geqslant \nu+i ,
\end{align*}

Now, for $\varepsilon > 0$, we consider $\phi _{\varepsilon
}(x)=\varepsilon ^{-d}\phi (\varepsilon ^{-1}x)$ with $\phi \in
\mathcal{C}^{\infty}_{c}(\mathbb{R}^{d}),\; \phi \geqslant 0.$ and for a fixed $x_{0} \in \mathbb{R}^{d}$, we define $\phi
_{\varepsilon ,x_{0}}(x)=\phi _{\varepsilon }(x-x_{0}).$ Moreover, denote 
\begin{align*}
\Gamma_i=\Lambda^{\delta_{n}^{1},h}_{t_{j_i}-\delta_{n}^{1},t_{j_i}}  \left(    \prod_{j=j_i+1}^{i} Q^{\delta_{n}^{1}}_{t_{j-1},t_{j}-\delta_{n}^{1}} \Lambda^{\delta_{n}^{1},h}_{t_{j}-\delta_{n}^{1},t_{j}} \right)    Q^{\delta_{n}^{1}}_{t_{i},T}   
\end{align*}

 Since we have (\ref{hyp:transport_regularite_semigroup}) and (\ref{hyp:transport_regularite_semigroup_approx}),
$ Q^{\delta_{n}^{1}}_{t_{j-1},t_{j}-\delta_{n}^{1}}\Gamma_i   f $ belongs to $\mathcal{C}^{\infty}_{b} (\mathbb{R}^{d})$. Using succesively $E_n(1,\alpha,\beta,P,Q)^{\ast}$ (see (\ref{hyp:erreur_tems_cours_fonction_test_reg_dual})) or  $\hat{E}(1,q_i(\nu,0),\kappa,P,Q)^{\ast}$ (see (\ref{hyp:erreur_tems_cours_fonction_test_reg_schema_nu})) with (\ref{hyp:transport_regularite_semigroup_approx_dual}), it follows that for a multi-index $\gamma \in \mathbb{N}^{d}$, $x_0 \in \mathbb{R}^d$,

\begin{align*}
\vert  \partial_{x}^{\gamma}  Q^{\delta_{n}^{1}}_{t_{j-1},t_{j}-\delta_{n}^{1}}\Gamma_i    & f (x_{0})\vert =  \lim_{\varepsilon
\rightarrow 0}\vert \langle  \Gamma_i^{\ast} Q^{\delta_{n}^{1},\ast}_{t_{j-1},t_{j}-\delta_{n}^{1}} \partial_{x}^{\gamma} \phi_{ \varepsilon,x_{0}}, f \rangle \vert \\
\leqslant & C \sup_{\varepsilon >0} \left \Vert Q^{\delta_{n}^{1},\ast}_{t_{j_i-1},t_{j_i}-\delta_{n}^{1}} \partial_{x}^{\gamma} \phi_{ \varepsilon,x_{0}} \right\Vert _{ (i-j_i+1) \max(\beta,\kappa(1,q_i(\nu,0))),1}  \left\Vert f \right\Vert_{\infty}  \frac{1+T^{p(\nu,\alpha)}}{n^{\sum_{i=1}^{j_i-1} q_i^h(j)}}  \\
\end{align*}%
Our concern is the case $\vert \gamma \vert \leqslant (j_i-1) \max(\beta,\kappa(1,q_i(\nu,0)))$. Using $R_{q_{\nu},\eta }(Q^{\delta_{n}^{1}})$ (see (\ref{hyp:reg_forte})) and more particularly the implication (\ref{hyp:reg_forte_dual}) on $Q^{\delta_{n}^{1},\ast}$, it follows that,

\begin{align*}
\left \Vert  \left( \prod_{j=1}^{i}  Q^{\delta_{n}^{1}}_{t_{j-1},t_{j}-\delta_{n}^{1}} \Lambda^{\delta_{n}^{1},h}_{t_{j}-\delta_{n}^{1},t_{j}} \right)    Q^{\delta_{n}^{1}}_{t_{i},T} f \right \Vert _{\infty} \leqslant  C \sup_{\varepsilon >0} \frac{1}{T(\nu)^{\eta}} \left\Vert \phi _{\varepsilon ,x_{0}} \right\Vert _{1}  \left\Vert f \right\Vert_{\infty}  \frac{1+T^{p(\nu,\alpha)}}{n^{\sum_{j=1}^{i} q_i^h(j)}}  
\end{align*}

and since $\sum_{j=1}^{i} q_i^h(j) \geqslant \nu + i$ and $\Vert \phi _{\varepsilon ,x_{0}}\Vert
_{1}=\Vert \phi \Vert _{1}\leqslant C,$ the proof is completed as long as $f \in \mathcal{C}_{c}(\mathbb{R}^{d})$. The extension to $f \in \mathcal{M}_{b}(\mathbb{R}^{d})$ is guaranteed by the Lusin's theorem.

\end{proof}

We are now interested by giving a variant of Theorem in which the regularization hypothesis is not required for $P$ or $Q$ but for some modifications of those semigroups.
\begin{proposition}
\label{prop:intro_erreur_faible_modif}
We recall that $T>0$ and $n \in \mathbb{N}^{\ast}$. Let $\nu >0$ and $\eta \geqslant 0$ and let us define $q_{\nu}=\max_{i\in \{1,\ldots,m(0,\nu) \}}(i\max(\beta,\kappa(1,q_i(\nu,0)))$. \\

We assume that (\ref{hyp:transport_regularite_semigroup}) and (\ref{hyp:transport_regularite_semigroup_approx}) and (\ref{hyp:transport_regularite_semigroup_dual}), (\ref{hyp:transport_regularite_semigroup_approx_dual}) hold and that the short time estimates $E_n(l,\alpha,\beta,P,Q)$ (see (\ref{hyp:erreur_tems_cours_fonction_test_reg})), and $E_n(l,\alpha,\beta,P,Q)^{\ast}$ (see (\ref{hyp:erreur_tems_cours_fonction_test_reg_dual})) hold for every $l \in \{1,\ldots,l(\nu,\alpha)\}$. Also, assume that there exists a modification $\overline{Q}^{\delta_{n}^{1}}$ (respectively $\overline{P}$) of $Q^{\delta_{n}^{1}}$ (resp. $P$) which satisfy  $R_{q_{\nu},\eta }(\overline{Q}^{\delta_{n}^{1}})$ (resp. $R_{q_{\nu},\eta }(\overline{P})$) (see \ref{hyp:reg_forte}) and such that for every $f \in \mathcal{M}_b(\mathbb{R}^d)$,
 
 \begin{align}
\forall s,t \in \pi^{\delta_{n}^{1}}, s < t  , \qquad \Vert Q^{\delta_{n}^{1}}_{s,t}f- \overline{Q}^{\delta_{n}^{1}}_{s,t}f\Vert _{\infty }+\Vert
P_{s,t}f-\overline{P}_{s,t}f\Vert _{\infty }\leqslant \frac{1+T^{\overline{p}}}{n^{\nu+m(0,\nu)}} \Vert f\Vert _{\infty } C(t-s)^{-\eta }.  
 \label{hyp:approx_distance_total_variation_loc}
\end{align}
where $\overline{p}$ depends on $\nu$ and $\alpha$
 Then, for every $f \in \mathcal{M}_b(\mathbb{R}^d)$,
\begin{align*}
\Vert P_{T}f-\hat{Q}^{\nu,\delta_{n}^{0}}_{0,T}f \Vert_{\infty }\leqslant  \frac{1}{n^{\nu}}  \Vert f \Vert _{\infty }  \frac{C(1+T^{p})}{T(\nu)^{\eta }}.  
\end{align*}
with $T(\nu)=\inf \left\{t \in \pi^{\delta_{n}^{1}},t \geqslant T\frac{n-m(0,v)}{n(m(0,v)+1)} \right\}$ and $p$ which depends on $\nu$ and $\alpha$.
\end{proposition}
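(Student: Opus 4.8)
The plan is to mimic the proof of Theorem~\ref{th:intro_erreur_faible} essentially line by line, the only new point being that wherever that argument invokes a regularization property it does so on a single time interval of length at least $T(\nu)$, and here I would reach that regularization through the modification rather than directly. Concretely, I would first note (as in the proof of Theorem~\ref{th:intro_erreur_faible}) that $\hat{E}_n(1,q,\kappa,P,Q)$ and $\hat{E}_n(1,q,\kappa,P,Q)^\ast$ for the relevant exponents $q$ follow from the transport bounds and the short time estimates; then start from the same Lindeberg-type representation $P_Tf-\hat{Q}^{\nu,\delta_n^0}_{0,T}f=\sum_{i=1}^{m(0,\nu)-1}(I^{\delta_n^1}_{0,T,i}-\hat{I}^{\delta_n^1}_{0,T,i})+R^{\delta_n^1}_{0,T,m(0,\nu)}$, expand each $I^{\delta_n^1}_{0,T,i}-\hat{I}^{\delta_n^1}_{0,T,i}$ into the signed sum over $h\in\{1,\dots,2^i\}$ of products $\bigl(\prod_{j=1}^i Q^{\delta_n^1}_{t_{j-1},t_j-\delta_n^1}\Lambda^{\delta_n^1,h}_{t_j-\delta_n^1,t_j}\bigr)Q^{\delta_n^1}_{t_i,T}$, and, using the convention $t_{i+1}=T+\delta_n^1$, single out the index $j_i$ for which $t_{j_i}-\delta_n^1-t_{j_i-1}\geqslant T(\nu)$. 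The observation that makes the whole scheme go through is that the factor acting over this long interval is always one of the plain semigroups $Q^{\delta_n^1}_{t_{j_i-1},t_{j_i}-\delta_n^1}$ (read $Q^{\delta_n^1}_{t_i,T}$ when $j_i=i+1$): it is never one of the $\Lambda^{\delta_n^1,h}$, since those act over the single step $\delta_n^1=T/n$, and in particular it never involves $\hat{Q}^{q_i(0,\nu),\delta_n^1}$, for which no modification is assumed.

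For each such product I would split the long-interval factor as $Q^{\delta_n^1}_{t_{j_i-1},t_{j_i}-\delta_n^1}=\bigl(Q^{\delta_n^1}_{t_{j_i-1},t_{j_i}-\delta_n^1}-\overline{Q}^{\delta_n^1}_{t_{j_i-1},t_{j_i}-\delta_n^1}\bigr)+\overline{Q}^{\delta_n^1}_{t_{j_i-1},t_{j_i}-\delta_n^1}$ and treat the two resulting pieces separately. For the difference piece, all the remaining factors of the product are bounded operators on $\mathcal{M}_b(\mathbb{R}^d)$ with a constant uniform in the $t_j$'s --- the $Q^{\delta_n^1}$ factors by (\ref{hyp:transport_regularite_semigroup_approx}) with $r=0$, the $\Lambda^{\delta_n^1,h}$ factors by (\ref{hyp:transport_regularite_semigroup}), (\ref{hyp:transport_regularite_semigroup_approx}) together with the $\mathcal{M}_b$-boundedness of $P$ and, by its recursive definition, of $\hat{Q}^{q_i(0,\nu),\delta_n^1}$ --- so (\ref{hyp:approx_distance_total_variation_loc}) applied with $t-s=t_{j_i}-\delta_n^1-t_{j_i-1}\geqslant T(\nu)$ bounds this piece by $C(1+T^{\overline{p}})n^{-\nu-m(0,\nu)}T(\nu)^{-\eta}\Vert f\Vert_\infty$. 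Summing over the at most $\binom{n}{i}\leqslant n^{m(0,\nu)-1}$ admissible families $0=t_0<\dots<t_i\leqslant T$ in $\pi^{\delta_n^1}$, over $h\in\{1,\dots,2^i\}$, and over $i$, the total contribution of all difference pieces is $\leqslant C(1+T^{\overline{p}})n^{-\nu-1}T(\nu)^{-\eta}\Vert f\Vert_\infty$, which already has the required order.

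For the piece in which $\overline{Q}^{\delta_n^1}_{t_{j_i-1},t_{j_i}-\delta_n^1}$ replaces $Q^{\delta_n^1}_{t_{j_i-1},t_{j_i}-\delta_n^1}$, I would repeat the estimate of Theorem~\ref{th:intro_erreur_faible} word for word: the transport bounds and the short time estimates $E_n$, $E_n^\ast$, $\hat{E}_n$, $\hat{E}_n^\ast$ are applied to the unchanged factors exactly as there, and the unique appeal to a regularization property, namely the dual consequence (\ref{hyp:reg_forte_dual}) of $R_{q_\nu,\eta}(Q^{\delta_n^1})$, is replaced by the same consequence of $R_{q_\nu,\eta}(\overline{Q}^{\delta_n^1})$, which is part of the hypothesis; this produces the bound $C(1+T^\nu)n^{-\nu-i}T(\nu)^{-\eta}\Vert f\Vert_\infty$ per product and, after summation, $\leqslant C(1+T^\nu)n^{-\nu}T(\nu)^{-\eta}\Vert f\Vert_\infty$. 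The remainder $R^{\delta_n^1}_{0,T,m(0,\nu)}$ is handled in the same manner, with one extra case: its factors are the $Q^{\delta_n^1}$'s, the short-span operators $P_{t_j-\delta_n^1,t_j}-Q^{\delta_n^1}_{t_j-\delta_n^1,t_j}$, and the terminal factor $P_{t_m,T}$, so the long interval is spanned either by some $Q^{\delta_n^1}_{t_{j-1},t_j-\delta_n^1}$, treated exactly as above, or by $P_{t_m,T}$, in which case I would substitute $\overline{P}_{t_m,T}$, control the error by the part of (\ref{hyp:approx_distance_total_variation_loc}) concerning $P$, and invoke $R_{q_\nu,\eta}(\overline{P})$ through its dual form. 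Collecting the finitely many contributions and bounding $(1+T^{\overline{p}})+(1+T^\nu)\leqslant C(1+T^p)$ for a suitable $p=p(\nu,\alpha)$ gives the announced estimate for $f\in\mathcal{C}_c^\infty(\mathbb{R}^d)$; the passage to arbitrary $f\in\mathcal{M}_b(\mathbb{R}^d)$ follows, as in Theorem~\ref{th:intro_erreur_faible}, from Lusin's theorem.

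The main obstacle, and the only content beyond Theorem~\ref{th:intro_erreur_faible}, is the bookkeeping ensuring that the substitution is always legitimate and harmless: one must verify that in every product of the decomposition the unique factor living on an interval of length at least $T(\nu)$ is a semigroup assumed to admit a regularizing modification (a plain $Q^{\delta_n^1}$, or in the remainder term the terminal $P$ --- never a $\Lambda^{\delta_n^1,h}$ and never $\hat{Q}$), and that the combinatorial factor $n^i\,2^i$ coming from the sums over intermediate times and over the sign patterns $h$ is dominated by the safety margin $n^{m(0,\nu)}$ deliberately built into the right-hand side of (\ref{hyp:approx_distance_total_variation_loc}), which holds because $i\leqslant m(0,\nu)-1$ throughout.
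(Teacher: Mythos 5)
Your proposal follows essentially the same route as the paper's own proof: the same Lindeberg decomposition and identification of the long interval, the same splitting of the long factor $Q^{\delta_n^1}_{t_{j_i-1},t_{j_i}-\delta_n^1}$ into $\overline{Q}^{\delta_n^1}+\bigl(Q^{\delta_n^1}-\overline{Q}^{\delta_n^1}\bigr)$, with the modified piece treated exactly as in Theorem~\ref{th:intro_erreur_faible} using $R_{q_\nu,\eta}(\overline{Q}^{\delta_n^1})$ and the difference piece absorbed via (\ref{hyp:approx_distance_total_variation_loc}) together with the transport bounds, the margin $n^{-\nu-m(0,\nu)}$ covering the sum over time grids. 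Your treatment is, if anything, more explicit than the paper's (the case $j_i=i+1$, and the role of $\overline{P}$ in the remainder, which the paper only calls ``similar''), and it is correct at the same level of detail.
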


\begin{remark}
Notice that $\overline{P}$ and $\overline{Q}^{\delta_{n}^{1}}$ are not supposed
to satisfy the semigroup property.
\end{remark}

\begin{proof}
 The proof follows the same line as the one of the previous
Theorem \ref{th:intro_erreur_faible}. Consequently, we only focus on the specificity of this proof, avoiding arguments which are similar to the previous proof. In particular we study, for every $i \in \{1,\ldots,m(0,\nu)\}$,

\begin{align*}
\Vert  Q^{\delta_{n}^{1}}_{t_{i},T} & \prod_{j=1}^{i} \Lambda^{\delta_{n}^{1},h}_{t_{j}-\delta_{n}^{1},t_{j}}   Q^{\delta_{n}^{1}}_{t_{j-1},t_{j}-\delta_{n}^{1}}  f\Vert _{\infty
}\\
\leqslant &  \Vert  Q^{\delta_{n}^{1}}_{t_{i},T}   \prod_{j=j_i+1}^{i} \Lambda^{\delta_{n}^{1},h}_{t_{j}-\delta_{n}^{1},t_{j}}   Q^{\delta_{n}^{1}}_{t_{j-1},t_{j}-\delta_{n}^{1}} \Lambda^{\delta_{n}^{1},h}_{t_{j_i}-\delta_{n}^{1},t_{j_i}}  \overline{Q}^{\delta_{n}^{1}}_{t_{j_i-1},t_{j_i}-\delta_{n}^{1}} \prod_{j=1}^{j_i-1} \Lambda^{\delta_{n}^{1},h}_{t_{j}-\delta_{n}^{1},t_{j}}   Q^{\delta_{n}^{1}}_{t_{j-1},t_{j}-\delta_{n}^{1}}f\Vert _{\infty
} \\ 
& +   \Vert  Q^{\delta_{n}^{1}}_{t_{i},T}   \prod_{j=j_i+1}^{i} \Lambda^{\delta_{n}^{1},h}_{t_{j}-\delta_{n}^{1},t_{j}}   Q^{\delta_{n}^{1}}_{t_{j-1},t_{j}-\delta_{n}^{1}} \Lambda^{\delta_{n}^{1},h}_{t_{j_i}-\delta_{n}^{1},t_{j_i}} \left(Q^{\delta_{n}^{1}}_{t_{j_i-1},t_{j_i}-\delta_{n}^{1}}-  \overline{Q}^{\delta_{n}^{1}}_{t_{j_i-1},t_{j_i}-\delta_{n}^{1}} \right) \prod_{j=1}^{j_i-1} \Lambda^{\delta_{n}^{1},h}_{t_{j}-\delta_{n}^{1},t_{j}}   Q^{\delta_{n}^{1}}_{t_{j-1},t_{j}-\delta_{n}^{1}}f\Vert _{\infty
} .
\end{align*}
The first term is studied similarly as in Theorem \ref{th:intro_erreur_faible} We use the same notations as introduced in this proof. For the second term we use (\ref{hyp:approx_distance_total_variation_loc}) together with successive application of (\ref{hyp:transport_regularite_semigroup_approx}) and it follows that

\begin{align*}
\Vert  Q^{\delta_{n}^{1}}_{t_{i},T}  \prod_{j=1}^{i} \Lambda^{\delta_{n}^{1},h}_{t_{j}-\delta_{n}^{1},t_{j}}   Q^{\delta_{n}^{1}}_{t_{j-1},t_{j}-\delta_{n}^{1}}  f\Vert _{\infty
} \leqslant    \frac{C(1+ T^{p(\nu,\alpha)}) }{T(\nu)^{\eta}} \Vert f \Vert _{\infty }  \frac{1}{n^{\nu+i}}+ \frac{C (1+T^{\overline{p}})}{T(\nu)^{\eta}} \Vert f \Vert _{\infty }  \frac{1}{n^{\nu+m(0,\nu)}}
\end{align*}
Notice that the study of the remainder $R^{\delta_{n}^{1}}_{0,T,m\left(0,\nu \right)}$ which appears in the proof of
Proposition \ref{th:intro_erreur_faible} is similar. Rearranging the terms completes the proof.
\end{proof}

At this point, we establish a total variation convergence result which does not require that the regularization property hold for $P$ but only on the collection of semigroups $\left(Q^{\delta}\right)_{\delta>0}$. More specifically, we consider the following hypothesis : Recall that  $q_{\nu}=\max_{i\in \{1,\ldots,m(0,\nu) \}}(i\max(\beta,\kappa(1,q_i(\nu,0)))$ with the definition of $m$, $\kappa$ and $q_i$ given in \ref{hyp:erreur_tems_cours_fonction_test_regschema_nu} and that $l(\nu,\alpha)=\lceil  \nu / \alpha  \rceil$. Let us consider the hypothesis:\\

\begin{center}
$\overline{R}_{n,\nu,\eta }(Q)$ \\
$\equiv$ \\
For every $k \in \mathbb{N}^{\ast}$,
\end{center}

  \begin{center}
    \begin{minipage}{0.89\textwidth}
\begin{enumerate}[label=$\overline{R}_{n,\nu,\eta }(Q)$\textbf{.\roman*.}]
\item \label{reg_property_mes_cont_cauchy_1}  (\ref{hyp:transport_regularite_semigroup_approx}) and (\ref{hyp:transport_regularite_semigroup_approx_dual}) hold with $l$ replaced by $k$.
\item \label{reg_property_mes_cont_cauchy_3} There exists a modification $\overline{Q}^{\delta_{n}^{k}}$ of $Q^{\delta_{n}^{k}}$ which satisfies  $R_{q_{\nu},\eta }(\overline{Q}^{\delta_{n}^{k}})$  (see (\ref{hyp:reg_forte})) and such that: $\forall s,t \in  \pi^{\delta_{k}^{1}}, s < t$,
 \begin{align}
\Vert Q^{\delta_{n}^{k}}_{s,t}f- \overline{Q}^{\delta_{n}^{k}}_{s,t}f\Vert _{\infty }\leqslant C(t-s)^{-\eta} \frac{\Vert f\Vert _{\infty }}{n^{\nu+m(0,\nu)}}.
 \label{hyp:approx_distance_total_variation_loc_theorem}
\end{align}
\end{enumerate}
\end{minipage}
\end{center}

\begin{theorem}
\label{theo:distance_density} 
We recall that $T>0$ and $n \in \mathbb{N}^{\ast}$. Let $\nu >0$ and $\eta \geqslant 0$.  \\
 
 Assume that (\ref{hyp:transport_regularite_semigroup}) and (\ref{hyp:transport_regularite_semigroup_dual}) hold. Assume that $\overline{R}_{n,\nu,\eta }(Q)$ hold and that for every $k \in \mathbb{N}^{\ast}$, the short time estimates $E_n(k,\alpha,\beta,P,Q)$ (see (\ref{hyp:erreur_tems_cours_fonction_test_reg})), and $E_n(k,\alpha,\beta,P,Q)^{\ast}$ (see (\ref{hyp:erreur_tems_cours_fonction_test_reg_dual})) hold. Then, for every $f \in \mathcal{M}_b(\mathbb{R}^d)$,
\begin{align}
\Vert P_{T}f-\hat{Q}^{\nu,\delta_{n}^{0}}_{0,T}f \Vert_{\infty }\leqslant  \frac{1}{n^{\nu}}  \Vert f \Vert _{\infty }    \frac{C (1+T^{p}) }{T(\nu)^{\eta }}.
\label{eq:theo_approx_semigroup_theorem}
\end{align}
with $T(\nu)=\inf \left\{t \in \pi^{\delta_{n}^{1}},t \geqslant T\frac{n-m(0,v)}{n(m(0,v)+1)} \right\}$ and $p$ which depends on $\nu$ and $\alpha$.

\end{theorem}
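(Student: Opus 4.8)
The plan is to reuse the template of Theorem~\ref{th:intro_erreur_faible} and Proposition~\ref{prop:intro_erreur_faible_modif}: expand $P_Tf-\hat Q^{\nu,\delta_{n}^{0}}_{0,T}f$ by the Lindeberg formula into $\sum_{i=1}^{m(0,\nu)-1}\big(I^{\delta_{n}^{1}}_{0,T,i}-\hat I^{\delta_{n}^{1}}_{0,T,i}\big)+R^{\delta_{n}^{1}}_{0,T,m(0,\nu)}$, write each summand as a sum over ordered partitions of products of $Q^{\delta_{n}^{1}}$–blocks interlaced with difference blocks $\Lambda\in\{P-Q^{\delta_{n}^{1}},\ \hat Q^{q_i(0,\nu),\delta_{n}^{1}}-P\}$ (and, in the remainder, a trailing factor $P_{t_m,T}$), isolate in each product the longest block — which, for $n$ large, is necessarily of length $\geqslant T(\nu)$ — peel off all other blocks on each side by means of the short-time estimates $E_n(1,\alpha,\beta,P,Q)$, $E_n(1,\alpha,\beta,P,Q)^{\ast}$, $\hat E_n(1,q_i(0,\nu),\kappa,P,Q)$ and their duals, together with the transport bounds \eqref{hyp:transport_regularite_semigroup}, \eqref{hyp:transport_regularite_semigroup_approx}, \eqref{hyp:transport_regularite_semigroup_dual}, \eqref{hyp:transport_regularite_semigroup_approx_dual}, and finally regularize the longest block, trading the remaining derivatives against the factor $T(\nu)^{-\eta}$ and an approximate identity $\phi_{\varepsilon,x_0}$ whose $\mathrm{L}^1$ norm stays bounded.

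For the terms $I^{\delta_{n}^{1}}_{0,T,i}-\hat I^{\delta_{n}^{1}}_{0,T,i}$ the trailing operator is $Q^{\delta_{n}^{1}}_{t_i,T}$, so the longest block is always a $Q^{\delta_{n}^{1}}$–block; the argument is then verbatim that of Proposition~\ref{prop:intro_erreur_faible_modif}, replacing this block by its modification $\overline Q^{\delta_{n}^{1}}$ supplied by $\overline R_{n,\nu,\eta}(Q)$ at $k=1$ (item~\ref{reg_property_mes_cont_cauchy_1}), estimating the correction by \eqref{hyp:approx_distance_total_variation_loc_theorem}, and using $R_{q_{\nu},\eta}(\overline Q^{\delta_{n}^{1}})$ together with the dual implication \eqref{hyp:reg_forte_dual}. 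Since the configuration in which every block equals $P-Q^{\delta_{n}^{1}}$ is excluded, the accumulated power on each product is $n^{-\sum_j q_i^h(j)}$ with $\sum_j q_i^h(j)\geqslant \nu+i$, exactly as before.

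The genuinely new point is the remainder $R^{\delta_{n}^{1}}_{0,T,m(0,\nu)}$, whose trailing factor $P_{t_m,T}$ may itself be the longest block; as no regularization of $P$ is assumed it cannot be treated directly (this is precisely where Theorem~\ref{th:intro_erreur_faible} invoked $R_{q_{\nu},\eta}(P)$). I would resolve this by a downward recurrence on the level $l$: for each $l$ one reproves the analogue of Proposition~\ref{prop:intro_erreur_faible_modif} for $\lVert P_{t,t+\delta_{n}^{l}}f-\hat Q^{\mu,\delta_{n}^{l}}_{t,t+\delta_{n}^{l}}f\rVert_{\infty}$, $f\in\mathcal M_b(\mathbb R^d)$, in which the modification $\overline P$ of $P$ required by that Proposition is now \emph{constructed}: on a grid interval $[s,t]\in\pi^{\delta_{n}^{l}}$ one takes a regularizing modification of $\hat Q^{\mu',\delta_{n}^{l+1}}_{s,t}$ with $\mu'$ large, obtained by substituting in each constituent product the longest $Q^{\delta_{n}^{l+1}}$–block (of length $\geqslant(t-s)/m(l+1,\mu')$) by $\overline Q^{\delta_{n}^{l+1}}$, so that $R_{q_{\nu},\eta}(\overline{\hat Q})$ follows from $R_{q_{\nu},\eta}(\overline Q^{\delta_{n}^{l+1}})$; the bounded-and-measurable closeness \eqref{hyp:approx_distance_total_variation_loc} then follows by combining \eqref{hyp:approx_distance_total_variation_loc_theorem} at level $l+1$ with the inductive estimate $\lVert P_{s,t}f-\hat Q^{\mu',\delta_{n}^{l+1}}_{s,t}f\rVert_{\infty}\leqslant C(t-s)^{-\eta}\lVert f\rVert_{\infty}/n^{\mu'}$. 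The recurrence bottoms out at the level where $m(\cdot)=1$ and $\hat Q^{\cdot,\delta_{n}^{l}}_{t,t+\delta_{n}^{l}}=Q^{\delta_{n}^{l+1}}_{t,t+\delta_{n}^{l}}$, where the bounded-and-measurable estimate is proved by a direct Bally–Rey type argument — Lindeberg telescoping over the $n$ steps of size $\delta_{n}^{l+1}$, one application of $E_n(l+1,\alpha,\beta,P,Q)$ per step, and one regularization of a central sub-block of $Q^{\delta_{n}^{l+1}}$ of length $\asymp\delta_{n}^{l}/2$ through $\overline Q^{\delta_{n}^{l+1}}$ and \eqref{hyp:approx_distance_total_variation_loc_theorem} — and only finitely many levels intervene since $l\leqslant l(\nu,\alpha)=\lceil\nu/\alpha\rceil$. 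Feeding the resulting $\overline P$ at level $1$ into Proposition~\ref{prop:intro_erreur_faible_modif} yields \eqref{eq:theo_approx_semigroup_theorem}.

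The hard part is the bookkeeping that makes the recurrence close up: one must check that the order $\mu'$ required at level $l+1$ — large enough to absorb the combinatorial factor $\binom{n}{m}\asymp n^{m}$ from summing over partitions, the loss $n^{-(\mu+m(l,\mu))}$ demanded in \eqref{hyp:approx_distance_total_variation_loc}, and the regularization exponent $\eta$ — is still $\leqslant(l+2)(\alpha+1)-1$ at the bottom level and hence attainable by a single discrete semigroup, that the polynomial-in-$T$ constants $T^{p}$ accumulate through the levels to a final exponent depending only on $\nu$ and $\alpha$, and that in each constituent product of $\hat Q^{\mu',\delta_{n}^{l+1}}_{s,t}$ the longest $Q$–block is long enough (relative to $T(\nu)$) for $R_{q_{\nu},\eta}$ to be applied with a controlled blow-up. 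Each of these verifications is routine but tedious and parallels the corresponding estimates in \cite{Alfonsi_Bally_2019} and \cite{Rey_2024}.
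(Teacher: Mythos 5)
Your treatment of the terms $I^{\delta_{n}^{1}}_{0,T,i}-\hat I^{\delta_{n}^{1}}_{0,T,i}$ is consistent with the paper, and you have correctly located the crux: the remainder $R^{\delta_{n}^{1}}_{0,T,m(0,\nu)}$, whose trailing factor $P_{t_m,T}$ may carry the longest time interval while no regularization is assumed on $P$. But your resolution of it has a genuine gap. Your downward recursion bottoms out in a direct Lindeberg comparison of $P_{s,t}$ with $Q^{\delta_{n}^{l+1}}_{s,t}$ on merely bounded measurable $f$, where you propose one short-time estimate per step plus ``one regularization of a central sub-block of $Q^{\delta_{n}^{l+1}}$''. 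Write a telescoping term as $Q^{\delta_{n}^{l+1}}_{s,(u-1)\delta}\bigl(P_{(u-1)\delta,u\delta}-Q^{\delta_{n}^{l+1}}_{(u-1)\delta,u\delta}\bigr)P_{u\delta,t}f$ with $\delta=\delta_{n}^{l+1}$: the smallness $\delta^{\alpha+1}$ can only be extracted from the middle difference, which requires either smoothness of the function on its right, $P_{u\delta,t}f$ (unavailable, since $f$ is only bounded and $P$ has no regularization property), or, dually via (\ref{hyp:reg_forte_dual}), smoothness coming from the left block $Q^{\delta_{n}^{l+1}}_{s,(u-1)\delta}$. For the terms where this left interval is shorter than $(t-s)/2$ — and for $u=1$ it is empty — there is no $Q$-block of length $\asymp\delta_{n}^{l}/2$ to regularize, and the blow-up $((u-1)\delta-s)^{-\eta}$ ruins the bound; resumming these bad terms only reproduces $P_{s,t}-Q^{\delta_{n}^{l+1}}_{s,u_0\delta}P_{u_0\delta,t}$, a difference of exactly the same type, so nothing is gained. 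The same obstruction propagates to every level of your recursion, because the inductive closeness of $P$ to $\hat Q^{\mu',\delta_{n}^{l+1}}$ in the bounded-measurable sense is precisely a statement of the kind being proved.

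The missing idea is to never compare $P$ with a discrete semigroup directly on bounded measurable functions. The paper first shows (Step 1) that $\bigl(Q^{\delta_{n}^{k}}_{0,T}f\bigr)_{k}$ is Cauchy in $(\mathcal M_b(\mathbb R^d),\Vert\cdot\Vert_\infty)$, uniformly over $\Vert f\Vert_\infty\leqslant 1$, by a Lindeberg comparison of the two discrete semigroups $Q^{\delta_{n}^{k}}$ and $Q^{\delta_{n}^{k'}}$: there, whichever side the longest block falls on, it is a $Q$-block with a regularizing modification supplied by $\overline R_{n,\nu,\eta}(Q)$ and (\ref{hyp:approx_distance_total_variation_loc_theorem}), and $P$ only enters inside one-coarse-step differences evaluated on smooth functions, where $E_n(k,\alpha,\beta,P,Q)$ and its dual apply directly thanks to (\ref{hyp:transport_regularite_semigroup}) and (\ref{hyp:transport_regularite_semigroup_dual}). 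The limit is then identified with $P_T$ using smooth test functions only, which gives $\Vert P_Tf-Q^{\delta_{n}^{k}}_{0,T}f\Vert_\infty\leqslant C T(\nu)^{-\eta}n^{-\nu}\Vert f\Vert_\infty$ for $k\geqslant l(\nu,\alpha)$ and all bounded measurable $f$. Finally (Step 2) Proposition \ref{prop:intro_erreur_faible_modif} is applied with $P$ replaced by $Q^{\delta_{n}^{k}}$, so the trailing factor of the remainder becomes $Q^{\delta_{n}^{k}}_{t_m,T}$, which can be regularized through $\overline Q^{\delta_{n}^{k}}$, and the two estimates are combined. If you prefer to keep your formulation with a modification $\overline P$ of $P$, you may take $\overline P:=\overline Q^{\delta_{n}^{k}}$ for $k$ large, but the closeness (\ref{hyp:approx_distance_total_variation_loc}) must then be obtained from this Cauchy argument, not from the direct telescoping at the base of your recursion.
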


\begin{remark}
The inequality (\ref{eq:theo_approx_semigroup_theorem}) is essentially a consequence of Theorem \ref{th:intro_erreur_faible}. However, we may not use directly this result, because we do not assume that the semigroup $P$ has the regularization property $R_{q_{\nu},\eta }(P)$ (see (\ref{hyp:reg_forte})) This is a result of main interest since we have to check the regularization properties for the approximations $Q^{\delta}$ only. Notice that the method we use does not allow to prove the same result when assuming regularization hypothesis on $P$ instead of $Q$. The reason is that our proof consist in considering $P$ as the limit of $Q^{\delta}$ as $\delta$ tends to 0. It is not possible to act similarly in the other way as $P$ does not depend on such a ${\delta}$.
\end{remark}


\begin{proof}[Proof of Theorem \ref{theo:distance_density}]

We fix $n \in \mathbb{N}^{\ast}$ and we study the sequence of discrete semigroups $\left( \left( Q^{\delta_{n}^{k}}_{s,t} \right)_{s,t \in \pi^{\delta_{n}^{k}};s \leqslant t} \right)_{k \in \mathbb{N}^{\ast}}$.

\textbf{Step 1.} We show that for every bounded and measurable test function $f$, $\left(Q^{\delta_{n}^{k}}_{0,T}f \right)_{k \in \mathbb{N}^{\ast}}$ is Cauchy in $( \mathcal{M}_{b}(\mathbb{R}^{d}),\Vert.\Vert_{\infty})$ and that, for $k \geqslant l(\nu,\alpha)$,
\begin{align}
\label{eq:estim_tv_cauchy_limit}
\left \Vert P_{T}f-Q^{\delta_{n}^{k}}_{0,T}f \right \Vert_{\infty} \leqslant \frac{CT^{\alpha+1}}{T(\nu)^{\eta}} \left \Vert f \right \Vert_{\infty} \frac{1}{n^{\nu}}.
\end{align}
where $p(\nu,\alpha)$ is a constant depending on $\nu$ and $\alpha$ (and which may change from line to line in the following calculus).\\

For $k' \geqslant k \in \mathbb{N}^{\ast}$, following the Lindeberg decomposition yields 

\begin{align*}
\left \Vert Q^{\delta_{n}^{k'}}_{0,T}f-Q^{\delta_{n}^{k}}_{0,T}f \right \Vert_{\infty} \leqslant   \sum_{m=1}^{n} \left \Vert   Q^{\delta_{n}^{k}}_{0,\left(m-1\right)\delta_{n}^{1}} \left( Q^{\delta_{n}^{k'}}_{\left(m-1\right)\delta_{n}^{1},m\delta_{n}^{1}} - Q^{\delta_{n}^{k}}_{\left(m-1\right)\delta_{n}^{1},m\delta_{n}^{1}}   \right)  Q^{\delta_{n}^{k'}}_{m\delta_{n}^{1},T}  f \right \Vert_{\infty}.
\end{align*}

Now notice that for $g \in \mathcal{C}_b^{\beta}\left(\mathbb{R}^d\right)$,
\begin{align*}
\left \Vert Q^{\delta_{n}^{k'}}_{\left(m-1\right)\delta_{n}^{1},m\delta_{n}^{1}}g-Q^{\delta_{n}^{k}}_{\left(m-1\right)\delta_{n}^{1},m\delta_{n}^{1}}g \right \Vert_{\infty}  \leqslant & \left \Vert P_{\left(m-1\right)\delta_{n}^{1},m\delta_{n}^{1}}g-Q^{\delta_{n}^{k'}}_{\left(m-1\right)\delta_{n}^{1},m\delta_{n}^{1}}g \right \Vert_{\infty} \\
& +\left \Vert P_{\left(m-1\right)\delta_{n}^{1},m\delta_{n}^{1}}g-Q^{\delta_{n}^{k}}_{\left(m-1\right)\delta_{n}^{1},m\delta_{n}^{1}}g \right \Vert_{\infty}
\end{align*}
with, as a consequence of $E_{n}(1,\alpha,\beta,P,Q)$ (see (\ref{hyp:erreur_tems_cours_fonction_test_reg})), $\left \Vert P_{\left(m-1\right)\delta_{n}^{1},m\delta_{n}^{1}}g-Q^{\delta_{n}^{k}}_{\left(m-1\right)\delta_{n}^{1},m\delta_{n}^{1}}g \right \Vert_{\infty} \leqslant C\left \Vert g \right \Vert_{\infty,\beta} \frac{T^{\alpha+1}}{n^{\alpha+1}} $ if $k=1$, and if $k>1$
\begin{align*}
\big \Vert P_{\left(m-1\right)\delta_{n}^{1},m\delta_{n}^{1}}f- &   Q^{\delta_{n}^{k}}_{\left(m-1\right)\delta_{n}^{1},m\delta_{n}^{1}}g \big \Vert_{\infty} \\
 & \leqslant   \sum_{u=1+n^{k-1} \left(m-1\right)}^{mn^{k-1}} \left \Vert  Q^{\delta_{n}^{k}}_{\left(m-1\right)\delta_{n}^{1},\left(u-1\right)\delta_{n}^{k}} \left( P_{\left(u-1\right)\delta_{n}^{k},u\delta_{n}^{k}} - Q^{\delta_{n}^{k}}_{\left(u-1\right)\delta_{n}^{k},u\delta_{n}^{k}}   \right) P_{u \delta_{n}^{k},m\delta_{n}^{1}}   g \right \Vert_{\infty} \\
& \leqslant C  \left \Vert g \right \Vert_{\infty,\beta} \frac{T^{\alpha+1}}{n^{k\alpha+1}}
\end{align*}

where we have used $E_{n}(k,\alpha,\beta,P,Q)$ (see (\ref{hyp:erreur_tems_cours_fonction_test_reg})). Consequently
\begin{align*}
\left \Vert Q^{\delta_{n}^{k'}}_{\left(m-1\right)\delta_{n}^{1},m\delta_{n}^{1}}g-Q^{\delta_{n}^{k}}_{\left(m-1\right)\delta_{n}^{1},m\delta_{n}^{1}}g \right \Vert_{\infty} \leqslant C  \left \Vert g \right \Vert_{\infty,\beta} \frac{T^{\alpha+1}}{n^{k\alpha+1}}
\end{align*}

 In the same way we deduce from $E_{n}(k,\alpha,\beta,P,Q)^{\ast}$ (see (\ref{hyp:erreur_tems_cours_fonction_test_reg_dual})) that

\begin{align*}
\left \Vert Q^{\delta_{n}^{k'},\ast}_{\left(m-1\right)\delta_{n}^{1},m\delta_{n}^{1}}g-Q^{\delta_{n}^{k},\ast}_{\left(m-1\right)\delta_{n}^{1},m\delta_{n}^{1}}g \right \Vert_{1} \leqslant C  \left \Vert g \right \Vert_{1,\beta} \frac{T^{\alpha+1}}{n^{k\alpha+1}}
\end{align*}

Combining those estimates with $R_{q_{\nu},\eta }(\overline{Q}^{\delta_{n}^{k}})$ and $R_{q_{\nu},\eta }(\overline{Q}^{\delta_{n}^{k'}})$ together with (\ref{hyp:approx_distance_total_variation_loc_theorem}), the same approach as in the proof of Proposition \ref{prop:intro_erreur_faible_modif} yields, for every $f \in \mathcal{M}_{b}(\mathbb{R}^{d})$,

\begin{align}
\label{eq:estim_tv_cauchy}
\left \Vert Q^{\delta_{n}^{k'}}_{0,T}f-Q^{\delta_{n}^{k}}_{0,T}f \right \Vert_{\infty} \leqslant  \frac{1}{n^{k\alpha}}\Vert f \Vert_{\infty} \frac{C(1+T^{p(k,\alpha)})}{T(\nu)^{\eta}}
\end{align}

The sequence $\left(Q^{\delta_{n}^{k}}_{0,T}f \right)_{k \in \mathbb{N}^{\ast}}$ is thus Cauchy in $( \mathcal{M}_{b}(\mathbb{R}^{d}),\Vert.\Vert_{\infty})$ and then $ \lim_{k \to \infty}Q^{\delta_{n}^{k}}_{0,T}f$ exists and belongs to $ \mathcal{M}_{b}(\mathbb{R}^{d})$. Moreover, remember that as soon as $f \in \mathcal{C}^{\infty}_c\left(\mathbb{R}^d\right)$, (\ref{hyp:erreur_tems_cours_fonction_test_regschema_nu}) holds and then $Q^{\delta_{n}^{k}}_{0,T}f $ converges to $P_Tf$ as $k$ tends to infinity so that it is also the case when $f$ is simply bounded and measurable. Taking $k \geqslant \frac{\nu}{\alpha}$ in (\ref{eq:estim_tv_cauchy}) and letting $k'$ tends to infinity, if follows that (\ref{eq:estim_tv_cauchy_limit}) holds.\\

\textbf{Step 2.} We now show that for every $k \geqslant l(\nu/\alpha)=\lceil \nu/\alpha \rceil$ and every $f \in \mathcal{M}_b$, 

\begin{align}
\label{eq;erreur_faible_cauchy}
\Vert  Q^{\delta_{n}^{k}}_{0,T}f - \hat{Q}^{\nu,\delta_{n}^{0}}_{0,T}f  \Vert_{\infty} \leqslant \frac{C(1+T^{p(\nu,\alpha)}) }{T(\nu)^{\eta}} \Vert f \Vert _{\infty }  \frac{1}{n^{\nu}}.
\end{align}

Let $k \geqslant l(\nu,\alpha)$. We remark that if we replace  $P$ by $Q^{\delta_{n}^{k}}$, the short time estimates $E_n(l,\alpha,\beta,Q^{\delta_{n}^{k}},Q)$ (see (\ref{hyp:erreur_tems_cours_fonction_test_reg})), and $E_n(l,\alpha,\beta,Q^{\delta_{n}^{k}},Q)^{\ast}$ (see (\ref{hyp:erreur_tems_cours_fonction_test_reg_dual})) still hold for every  $l \in \{1,\ldots,l(\nu,\alpha)\}$. \\

Moreover, from \ref{reg_property_mes_cont_cauchy_3}, for every $k \in \mathbb{N}^{\ast}$, the property $R_{q_{\nu},\eta}(\overline{Q}^{\delta_{n}^{k}})$ (see \ref{hyp:reg_forte}) holds for a modification $\overline{Q}^{\delta_{n}^{k}}$ of $Q^{\delta_{n}^{k}}$ which satisfies (\ref{hyp:approx_distance_total_variation_loc_theorem}). Therefore, all the assumption of Proposition \ref{th:intro_erreur_faible} are fulfilled when we replace $P$ by $Q^{\delta_{n}^{k}}$, so that $\ref{eq;erreur_faible_cauchy}$ holds.

\textbf{Step 3.} We combine (\ref{eq:estim_tv_cauchy_limit}) and (\ref{eq;erreur_faible_cauchy}) and (\ref{eq:theo_approx_semigroup_theorem}) follows.

\end{proof}

\section{Total variation convergence for a class of semigroups}
\subsection{A Class of Markov Semigroups} \;

In this section we investigate the regularization properties of $Q^{\delta_{n}^{1}}$ and $Q^{\delta_{n}^{l(\nu,\alpha)}}$ which are crucial to derive total variation convergence results through Theorem \ref{theo:distance_density}. In particular we propose an application where $Q^{\delta_{n}^{1}}$ and $Q^{\delta_{n}^{l(\nu,\alpha)}}$ are the discrete semigroups of discrete Markov processes defined through an abstract random recurrence. Regularization properties are then obtained for some modifications of those semigroups under H\"ormander assumptions. More particularly, we will obtain regularization property for modifications of the family of discrete semigroups $(Q^{\delta})_{\delta>0}$. Our approach is similar to the one developped in \cite{Rey_2024} where regularization properties were established for such semigroups in a slightly more general setting. We introudce the result from this paper we need and adapt to our current framework.\\

\noindent \textbf{Definition of the semigroups.}
We work on a probability space $(\Omega,\mathcal{F},\mathbb{P})$. For $\delta \in (0,1]$ and $N\in \mathbb{N}^{\ast}$, we consider a sequence of
independent random variables $Z^{\delta}_{t}\in \mathbb{R}^{N},\; t\in
\pi^{\delta,\ast}$, and we assume that $Z^{\delta}_{t}$, are centered with $ \mathbb{E}[ Z^{\delta,i}_{t} Z^{\delta,j}_{t}]=\mathbf{1}_{i,j}$ for every $i,j \in \mathbf{N} :=   \{1,\ldots,N\}$ and every $t\in
\pi^{\delta,\ast}$. We construct the $\mathbb{R}^{d}$-valued Markov process $(X^{\delta}_{t})_{t \in \pi^{\delta}}$ in the following way:%
\begin{align}
X^{\delta}_{t+\delta}=\psi(X^{\delta}_{t},t,\delta^{\frac{1}{2}} Z^{\delta}_{t+\delta}, \delta) , \quad t \in  \pi^{\delta}, \quad X^{\delta}_{0}=\mbox{\textsc{x}}^{\delta}_{0}\in \mathbb{R}^d \label{eq:schema_general}
\end{align}%

where
\begin{align*}
\psi \in \mathcal{C}^{\infty }( \mathbb{R}^{d}\times \mathbb{R}_+\times \mathbb{R}^{N} \times [0,1];\mathbb{R}^{d})\quad \mbox{and} \quad  \forall (x,t) \in \mathbb{R}^{d} \times \pi^{\delta},\psi
(x,t,0,0)=x.  
\end{align*}
Let us now define the discrete time semigroup associated to $(X^{\delta}_{t})_{t \in \pi^{\delta}}$. For every measurable function $f$ from $\mathbb{R}^{d}$ to $\mathbb{R}$, and every $x \in \mathbb{R}^{d}$,
\begin{align*}
\forall s,t \in \pi^{\delta}, s \leqslant t, \qquad  Q^{\delta}_{s,t}f(x) =\int_{\mathbb{R}^{d}} f(y) Q^{\delta}_{s,t}(x,\mbox{d} y)  :=  \mathbb{E}[f(X^{\delta}_{t}) \vert X^{\delta}_{s}=x] .
  \end{align*}
  
We will obtain regularization properties for modifications of this discrete semigroup.  Our approach relies on some hypothesis on $\psi$ and $Z^{\delta}$ we now present.  \\

\noindent \textbf{Hypothesis on $\psi$.  Boundaries and H\"ormander property.} 

We first consider a boundary assumption concerning the derivatives of $\psi$: For $r \in \mathbb{N}^{\ast}$, 
\begin{enumerate}[label=$\mathbf{A}_{1}^{\delta}(r)$.]
\item \label{Hypothese_pol_growth} 
There exists $\mathfrak{D}_{r}\geqslant 1,\mathfrak{p}_{r}  \in \mathbb{N}$ such that 
 for every $(x,t,z,y) \in \mathbb{R}^{d} \times \mathbb{R}_{+} \times \mathbb{R}^{N} \times [0,1]$,
\begin{align}
 \sum_{\vert \gamma^{x}
\vert + \vert \gamma^{t} \vert =0}^{r} \sum_{\vert \gamma^{z} \vert  +\vert \gamma^{y} \vert 
=1}^{r-\vert \gamma^{x} \vert - \vert \gamma^{t} \vert}\vert \partial
_{x}^{\gamma^{x}}\partial_{t}^{\gamma^{t}} \partial_{z}^{\gamma^{z}} \partial_{y}^{\gamma^{y}}\psi\vert_{\mathbb{R}^{d}} (x,t,z,y) \leqslant \mathfrak{D}_{r}(1+ \delta^{-\frac{\mathfrak{p}_{r}}{2}} \vert z \vert_{\mathbb{R}^{N}}^{\mathfrak{p}_{r}}),
\label{eq:hyp_1_Norme_adhoc_fonction_schema}
\end{align}

\end{enumerate}
Without loss of generality, we assume that the sequences $(\mathfrak{D}_{r})_{r \in \mathbb{N}^{\ast}}$ and $(\mathfrak{p}_{r})_{r \in \mathbb{N}^{\ast}}$ are non decreasing.  We denote $\mathbf{A}_{1}^{\delta}(+\infty)$ when $\mathbf{A}_{1}^{\delta}(r)$ is satisfied for every $r \in \mathbb{N}^{\ast}$.

The second hypothesis we need on $\psi$ is  uniform weak H\"ormander property on some vector fields we now introduce. We denote the Lie bracket of two $\mathcal{C}^1$ vector fields in $\mathbb{R}^{d}$, $[,]:(\mathcal{C}^1(\mathbb{R}^{d},\mathbb{R}^{d}))^{2} \to \mathcal{C}^0(\mathbb{R}^{d},\mathbb{R}^{d})$, $f_1,f_2 \mapsto [f_{1},f_{2}] :=   \nabla_{x}f_{2}f_{1}-\nabla_{x}f_{1}f_{2}$. \\
We denote $\tilde{V}_{0}= \partial_{y} \psi(.,.,0,0)$, $V_{0}  :=  \tilde{V}_{0}-\frac{1}{2}\sum_{i=1}^{N} \partial_{z^{i}}^{2} \psi(.,.,0,0)$, $V_{i} =\partial_{z^{i}} \psi(.,.,0,0)$, $i \in \mathbf{N}$, $\bar{V}_0=V_{0}-\frac{1}{2}\sum_{i=1}^{N} \nabla_{x} V_{i} V_{i}$. For a multi-index $\alpha \in \{0,\ldots,N\}^{\Vert \alpha \Vert}$ and $V:\mathbb{R}^{d} \times \mathbb{R}_{+} \to \mathbb{R}^{d}$, we define also $V^{[\alpha]}$ using the recurrence relation $V^{[(\alpha,0)]}=[\bar{V}_0,V^{[\alpha]}]+\partial_{t}V^{[\alpha]}+\frac{1}{2}\sum_{i=1}^N[ V_{i},[V_{i},V^{[\alpha]}]]$ and $V^{[(\alpha,j)]} :=   [V_{j},V^{[\alpha]}]$ if $j \in \{1,\ldots,N\}$ with the convention $V^{[\emptyset]}=V$.  We are now in a position to introduce our H\"ormander hypothesis on $\psi$: For $L \in \mathbb{N}$,  the order of our H\"ormander condition,  let us define for every $(x,t) \in \mathbb{R}^{d} \times \mathbb{R}_{+}$,

\begin{align}
\label{def:hormander_func}
\mathcal{V}_{L}(x,t)  :=    1 \wedge \inf_{\mathbf{b} \in \mathbb{R}^{d}, \vert \mathbf{b} \vert_{\mathbb{R}^{d}} =1}  \sum_{\underset{ \Vert \alpha \Vert \leqslant L}{\alpha \in \{0,\ldots,N\}^{\Vert \alpha \Vert};}} \sum_{i=1}^{N} \langle V^{[\alpha]}_{i}(x,t) , \mathbf{b} \rangle_{\mathbb{R}^{d}}^{2} .
\end{align}

We introduce:
\begin{enumerate}[label=$\mathbf{A}_{2}^{\infty}(L)$.]
\item \label{Hypothese_hormander}  Our uniform weak H\"ormander property of order $L$,
\begin{align}
\label{hyp:unif_hormander}
\mathcal{V}_{L}^{\infty}   :=  \inf_{t \in \mathbb{R}_{+}}\inf_{x \in \mathbb{R}^{d}}\mathcal{V}_{L}(x,t)>0 .
\end{align}.
When $L=0$ this hypothesis is also called uniform ellipticity.
\end{enumerate}

\begin{remark}
 The reason we refer to the denomination H\"ormander resides in the fact (\ref{hyp:unif_hormander}) exactly corresponds to the commonly known uniform weak H\"ormander property the solution to the SDE (\ref{eq:SDE_inv_strato}), written in Stratonovich form as

\begin{align}
\label{eq:SDE_inv_strato}
X_{t}=X_{0}+\int_{0}^{t} \bar{V}_{0}(X_{s},s) \mbox{d}s+ \sum_{i=1}^{N} \int_{0}^{t} V_{i}(X_{s},s) \circ \mbox{d}W^{i}_{s}, \quad t \geqslant 0,X_{0}\in \mathbb{R}^{d}
\end{align}
where $((W^{i}_{t})_{t \geqslant 0}, i \in  \{1,\ldots,N\})$ are $N$ independent $\mathbb{R}$-valued standard Brownian motions and $\circ  \mbox{d}W^{i}_{s}$ stands for the Stratonovich integral $w.r.t.$ $(W^{i}_{t})_{t \geqslant 0}$. 
 When $L=0$, \ref{hyp:unif_hormander}) is also called uniform elliptic property We points out that (\ref{hyp:unif_hormander}) is equivalent to assume that $\mathbb{R}^{d}$ is spanned by $\{V_{i}^{[\alpha]}(x,t), i \in \{1,\ldots,N\},\alpha \in \{0,\ldots,N\}^{\Vert \alpha \Vert},  \Vert \alpha \Vert \leqslant L \}$ for every $(x,t) \in \mathbb{R}^{d} \times \mathbb{R}_{+}$.  In addition, the total variation type result we are going to establish consists in showing that $\hat{Q}^{\nu,\delta_{n}^{0}}_{0,T}f$ converges to $P_{T}f(x) := \mathbb{E}[f(X_{t}) \vert X_{0}=x]$ for any bounded and measurable test function $f$.
\end{remark}

\noindent \textbf{Hypothesis on $Z^{\delta}$.  Lebesgue lower bounded distributions.}  A first assumption concerns the finiteness of the moment of $Z^{\delta}$: For $p \geqslant 0$,
\begin{enumerate}[label=$\mathbf{A}_{3}^{\delta}(p)$.]
\item \label{hyp:moment_borne_Z_assumption} 
\begin{align}
\label{eq:hyp:moment_borne_Z} 
\mathfrak{M}_{p}(Z^{\delta}) :=   1\vee \sup_{t \in  \pi^{\delta,\ast}}\mathbb{E}[\vert Z^{\delta}_{t}\vert_{\mathbb{R}^{N}} ^{p}]<\infty .
\end{align}.
\end{enumerate}We denote $\mathbf{A}_{3}^{\delta}(+\infty)$ the assumption such that $\mathbf{A}_{3}^{\delta}(p)$ is satisfied for every $p \geqslant 0$. \\

A second assumption is made on the distribution of $Z^{\delta}$.  We suppose that the distribution of $Z^{\delta}$ is Lebesgue lower bounded:
\begin{enumerate}[label=$\mathbf{A}_{4}^{\delta}$.]
\item \label{hyp:moment_borne_Z} 
There exists $z_{\ast}=(z_{\ast ,t})_{t \in \pi^{\delta,\ast} }$ taking its values in $\mathbb{R}^{N}$ and $%
\varepsilon _{\ast },r_{\ast }>0$ such that for every Borel set $A\subset
\mathbb{R}^{N}$ and every $t \in \pi^{\delta,\ast},$%
\begin{align}
L^{\delta}_{z_{\ast }}(\varepsilon _{\ast },r_{\ast })\qquad \mathbb{P}(Z^{\delta}_{t}\in A)\geqslant
\varepsilon _{\ast }\lambda_{\mbox{Leb}} (A\cap B_{r_{\ast }}(z_{\ast ,t}))  \label{hyp:lebesgue_bounded}
\end{align}%
where $\lambda_{\mbox{Leb}} $ is the Lebesgue measure on $\mathbb{R}^{N}.$
\end{enumerate}

We introduce a final structural assumption specifying that the time step $\delta$ needs to be small enough. For $\delta \in (0,1]$, when (\ref{eq:hyp_1_Norme_adhoc_fonction_schema}) holds, we define
\begin{align}
\label{def:eta_delta}
\eta_{1}(\delta)  :=  & \delta^{-d\frac{44}{91}} \min( 1,\frac{10^{d}}{m_{\ast}^{d} \vert 2^{10} (1+T^{3})  \vert ^{\frac{d}{2}}}) \quad \mbox{and} \\
 \eta_{2}(\delta)  :=  & \min(\delta^{-\frac{1}{2}} \eta_{1}(\delta)^{-\frac{1}{d}}, \frac{1}{2} \vert \delta^{\frac{1}{2}} 8  \mathfrak{D}_{3} \vert^{-\frac{1}{\mathfrak{p}_{3}+1}}) \nonumber.
\end{align}
with $\mathfrak{D}_{3},\mathfrak{p}_{3}$ given in (\ref{eq:hyp_1_Norme_adhoc_fonction_schema}).
We introduce the following assumption:
\begin{enumerate}[label=$\mathbf{A}_{5}$.]
\item \label{hyp:hyp_5_loc_var} Assume that (\ref{eq:hyp_1_Norme_adhoc_fonction_schema}) and $\mathbf{A}_{2}^{\infty}(L)$ (see (\ref{hyp:unif_hormander})) hold and that $\delta \in (0,1]$ is small enough so that 
\begin{align*}
\eta_{1}(\delta) > & \max(1, \frac{2^{1-\frac{d}{2}}}{d^{-\frac{d}{2}}}, 2(\frac{T \mathcal{V}_{L}^{\infty} m_{\ast}}{40(L+1) N^{\frac{L(L+1)}{2}} })^{-d13^{L}} ,\\
& 2 \mathbf{1}_{L=0}+2 \mathbf{1}_{L>0} \vert m_{\ast}\frac {\vert 2^{8} (1+T) \vert^{-143}}{10N^{\frac{L(L-1)}{2}}} \vert^{-d13^{L-1}} ).
\end{align*}
and $\eta_{2}(\delta)>1$ where those quantities are defined in (\ref{def:eta_delta}).
\end{enumerate}
\subsection{An alternative regularization property}
For $T \in \pi^{\delta}$, $\theta>0$, and $G$ a d-dimensional Gaussian random variable with mean 0 and covariance identity and independent from $(Z^{\delta}_{t})_{t \in \pi^{\delta,\ast}}$, we define
\begin{align*}
Q_{s,t}^{\delta,\theta }f(x)  =\int_{\mathbb{R}^{d}} f(y) Q^{\delta,\theta}_{s,t}(x,\mbox{d} y)  :=  \mathbb{E}[f(X^{\delta}_{t}+\delta^{\theta }G) \vert X^{\delta}_{s} = x].
\end{align*}
 The following result is a a direct application of Theorem 2.1 in \cite{Rey_2024}
\begin{proposition}
\label{th:regul_main_result_intro}
Let $L \in \mathbb{N}$ and let $f \in \mathcal{C}_{b}^{\infty}(\mathbb{R}^{d} ; \mathbb{R})$.

 Then we have the following properties:

\begin{enumerate}[label=\textbf{\Alph*.}]
\item \label{th:reg_gauss_regprop_intro} Let $q \in \mathbb{N}$, let $\gamma,\zeta \in \mathbb{N}^{d}$ such that $\vert \gamma
\vert +\vert \zeta \vert \leqslant q$. Assume that $\mathbf{A}^{\delta}_{1}( \max(q+3,2 L + 5))$ (see (\ref{eq:hyp_1_Norme_adhoc_fonction_schema})),  $\mathbf{A}_{2}^{\infty}(L)$ (see (\ref{hyp:unif_hormander})), $\mathbf{A}_{3}^{\delta}(+\infty)$ (see (\ref{eq:hyp:moment_borne_Z})), $\mathbf{A}_{4}^{\delta}$ (see (\ref{hyp:lebesgue_bounded})) and \ref{hyp:hyp_5_loc_var} hold.  Then,  for every $x \in \mathbb{R}^{d}$,
\begin{align}
  \label{eq:borne_semigroupe_regularisation_gauss_main} 
 \vert \partial_{x}^{\zeta}Q_{s,t}^{\delta,\theta }\partial_{x}^{\gamma}f(x) \vert \leqslant &  \Vert f \Vert_{\infty} \frac{C\exp(C (t-s)) }{\vert (t-s)   \mathcal{V}_{L}^{\infty} \vert^{\eta }}  ,
\end{align}
where $\eta \geqslant 0$ depends on $d,L,q$ and $\theta$ and $c,C \geqslant 0$ depend on $d,N,L,q,$$ \mathfrak{D}_{\max(q+3,2 L + 5)},$ $\mathfrak{p}_{\max(q+3,2 L + 5)},\frac{1}{m_{\ast}},\frac{1}{r_{\ast}},\theta$ and on the moment of $Z^{\delta}$ and which may tend to infinity if one of those quantities tends to infinity. \\
\item \label{th:reg_gauss_distprop_intro}  Assume that hypothesis from \ref{th:reg_gauss_regprop_intro} are satisfied with $\mathbf{A}^{\delta}_{1}( \max(q+3,2 L + 5))$ replaced by $\mathbf{A}^{\delta}_{1}(2 L + 5)$. Then, for every $x \in \mathbb{R}^{d}$,

\begin{align*}
\vert Q^{\delta}_{T}f(x)-Q_{T}^{\delta,\theta }f(x)\vert 
\leqslant &  \delta^{\theta}    \Vert f \Vert_{\infty} \frac{C\exp(C (t-s)) }{\vert  (t-s)  \mathcal{V}_{L}^{\infty} \vert^{\eta }} 
\end{align*}

where $\eta \geqslant 0$ depends on $d,L$ and $\theta$ and $C \geqslant 0$ depend on $d,N,L,q,$ $\mathfrak{D}_{2 L + 5},$ $\mathfrak{p}_{2 L + 5}$, $\frac{1}{m_{\ast}},\frac{1}{r_{\ast}},\theta$ and on the moment of $Z^{\delta}$ and which may tend to infinity if one of those quantities tends to infinity. 

\end{enumerate}
\end{proposition}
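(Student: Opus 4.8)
The plan is to deduce both \ref{th:reg_gauss_regprop_intro} and \ref{th:reg_gauss_distprop_intro} directly from Theorem 2.1 of \cite{Rey_2024}, so that the work reduces to matching the present hypotheses with those of that reference and to tracking the notational dictionary between the two frameworks. First I would recall the setting of \cite{Rey_2024}: there one considers a discrete-time Markov chain obtained by iterating a one-step map $\psi$ driven by innovations $Z^{\delta}$, under a polynomial control of the derivatives of $\psi$ (with the $\delta^{-1/2}$ rescaling of the innovations built in), a finite-moment condition and a Lebesgue-lower-bound (Doeblin-type) condition on the $Z^{\delta}$, a quantitative smallness requirement on the time step, and a uniform weak H\"ormander condition of order $L$ on the vector fields $V_i^{[\alpha]}$ associated to $\psi$; the conclusion is a regularization estimate for the semigroup after a small Gaussian convolution, together with a bound for the cost of that convolution. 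The assumptions stated here are exactly (or are the specializations of) those: $\mathbf{A}_1^{\delta}(r)$ is the polynomial bound (\ref{eq:hyp_1_Norme_adhoc_fonction_schema}) on the derivatives of $\psi$, whose $\delta^{-\mathfrak{p}_r/2}\vert z\vert^{\mathfrak{p}_r}$ scaling matches the $\delta^{1/2}Z^{\delta}$ input in (\ref{eq:schema_general}); $\mathbf{A}_3^{\delta}(+\infty)$ and $\mathbf{A}_4^{\delta}$ are the moment and Lebesgue-lower-bound conditions on the innovations; $\mathbf{A}_2^{\infty}(L)$ is the uniform weak H\"ormander condition expressed through $\mathcal{V}_L^{\infty}>0$; and \ref{hyp:hyp_5_loc_var} is precisely the quantitative smallness of $\delta$ (through the thresholds $\eta_1(\delta),\eta_2(\delta)$ in (\ref{def:eta_delta})) under which the estimates of \cite{Rey_2024} are valid.

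Next I would treat the two points in turn. For \ref{th:reg_gauss_regprop_intro}, I would apply Theorem 2.1 of \cite{Rey_2024} to the Gaussian-regularized semigroup $Q^{\delta,\theta}_{s,t}$. Since $\vert\gamma\vert+\vert\zeta\vert\leqslant q$ derivatives are involved, the regularity of $\psi$ that is consumed is of order $\max(q+3,\,2L+5)$: the ``$+3$'' accounts for the extra derivatives absorbed in the Malliavin-type integration-by-parts argument of \cite{Rey_2024}, while ``$2L+5$'' is the baseline forced by iterating $L$ Lie brackets of the $V_i$, and this is exactly what $\mathbf{A}_1^{\delta}(\max(q+3,2L+5))$ supplies. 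The resulting estimate then has the announced shape $\Vert f\Vert_{\infty}\,C\exp(C(t-s))/\vert (t-s)\mathcal{V}_L^{\infty}\vert^{\eta}$, with $\eta$ depending on $d,L,q,\theta$ and the constants depending only on the quantities listed in the statement. For \ref{th:reg_gauss_distprop_intro}, I would instead invoke the companion estimate of Theorem 2.1 of \cite{Rey_2024} comparing $Q^{\delta}_T$ with its Gaussian perturbation $Q^{\delta,\theta}_T$; here no further derivatives of the test function are differentiated, so only $\mathbf{A}_1^{\delta}(2L+5)$ is needed, and the bound picks up the extra factor $\delta^{\theta}$ quantifying the size of the added Gaussian noise.

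The only genuine subtlety, and hence the step I expect to be the main obstacle, is the translation between the slightly more general framework of \cite{Rey_2024} and the present one. One must verify that the time-inhomogeneous map $\psi(x,t,z,y)$ — with its separate ``drift'' slot $y$ and ``noise'' slot $z$ — indeed fits the scheme of that reference, which amounts to checking that the associated vector fields $\bar{V}_0$, $V_i$, $V_i^{[\alpha]}$ (including the It\^o--Stratonovich correction $-\tfrac12\sum_{i=1}^N\partial_{z^i}^2\psi(.,.,0,0)$ and the bracket term $-\tfrac12\sum_{i=1}^N\nabla_x V_i\,V_i$) are identified exactly as in \cite{Rey_2024}, and that the uniform infimum $\mathcal{V}_L^{\infty}$ plays the role of the quantitative non-degeneracy constant there, so that it legitimately appears in the denominator $\vert (t-s)\mathcal{V}_L^{\infty}\vert^{\eta}$ of both estimates. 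Once this dictionary is in place, \ref{th:reg_gauss_regprop_intro} and \ref{th:reg_gauss_distprop_intro} are immediate restatements of the two parts of Theorem 2.1 of \cite{Rey_2024} in the notation of this paper.
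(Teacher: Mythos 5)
Your proposal matches the paper's treatment: the paper gives no separate argument and simply states that the proposition is a direct application of Theorem 2.1 in \cite{Rey_2024}, which is exactly the route you take, with the added (correct) bookkeeping of matching $\mathbf{A}_{1}^{\delta}$--$\mathbf{A}_{5}$, the H\"ormander constant $\mathcal{V}_{L}^{\infty}$, and the regularity orders $\max(q+3,2L+5)$ and $2L+5$ to the hypotheses of that reference. Nothing further is needed.
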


\subsection{Total variation convergence result}
Using the family of approximation schemes $(X^{\delta})_{\delta>0}$, for every $\nu \in \mathbb{N}^{\ast}$, we consider $\hat{Q}^{\nu,\delta_{n}^{0}}_{0,T}$ defined as in (\ref{eq:schema_ordre_quelconque}) and $P_{T}f(x) := \mathbb{E}[f(X_{T}) \vert X_{0}=x]$ where $X$ is the solution to (\ref{eq:SDE_inv_strato}). Now we are able to prove our main result.

\begin{theorem}
\label{th:main_result_psi_CVTV}
Let $T>0$, $n\in \mathbb{N}^{\ast}$, $\nu>0$ and $q_{\nu}=\max_{i\in \{1,\ldots,m(0,\nu) \}}(i\max(\beta,\kappa(1,q_i(\nu,0)))$. \\
\item We assume that $\mathbf{A}^{\delta}_{1}(+\infty)$ (see (\ref{eq:hyp_1_Norme_adhoc_fonction_schema})),  $\mathbf{A}_{2}^{\infty}(L)$ (see (\ref{hyp:unif_hormander})), $\mathbf{A}_{3}^{\delta}(+\infty)$ (see (\ref{eq:hyp:moment_borne_Z})), $\mathbf{A}_{4}^{\delta}$ (see (\ref{hyp:lebesgue_bounded})) and \ref{hyp:hyp_5_loc_var}  hold with $\delta$ replaced by $\delta^{1}_n$. Moreover, we assume that for every $k \in \mathbb{N}$, $k \geqslant n$, (\ref{hyp:transport_regularite_semigroup_approx}) and (\ref{hyp:transport_regularite_semigroup_approx_dual}) hold with $n$ replaced by $k$ and that the short time estimates $E_k(l,\alpha,\beta,P,Q)$ (see (\ref{hyp:erreur_tems_cours_fonction_test_reg})), and $E_k(l,\alpha,\beta,P,Q)^{\ast}$ (see (\ref{hyp:erreur_tems_cours_fonction_test_reg_dual})) hold for every $l \in \{1,\ldots,l(\nu,\alpha)\}$ if $k=n$ and for $l=l(\nu,\alpha)$ if $k > n$. If $n$ is large enough, then for every $f \in \mathcal{M}_{b}(\mathbb{R}^{d})$,
\begin{align}
\Vert P_{T}f-\hat{Q}^{\nu,\delta_{n}^{0}}_{0,T}f\Vert
_{\infty }\leqslant \frac{1}{n^{\nu}} \Vert f\Vert _{\infty }   \frac{C \exp(CT)}{(
\mathcal{V}_{L}^{\infty}T(\nu))^{\eta }} .
 \label{eq:distance_convergence_total_variation}
\end{align}
where $T(\nu)=\inf \left\{t \in \pi^{\delta_{n}^{1}},t \geqslant T\frac{n-m(0,v)}{n(m(0,v)+1)} \right\}$ and $\eta , C\geqslant 0$ do not depend on $n$ or $f$.

\end{theorem}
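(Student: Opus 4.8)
The plan is to deduce the estimate from Theorem~\ref{theo:distance_density}, applied with $P$ the semigroup of the Stratonovich SDE (\ref{eq:SDE_inv_strato}) and $Q=(Q^{\delta_{n}^{k}})_{k\geqslant 1}$ the family of discrete semigroups produced by the recursion (\ref{eq:schema_general}). Among the hypotheses of that theorem, the transport bounds (\ref{hyp:transport_regularite_semigroup_approx})–(\ref{hyp:transport_regularite_semigroup_approx_dual}) for $Q$ and the short-time estimates $E_{k}(l,\alpha,\beta,P,Q)$, $E_{k}(l,\alpha,\beta,P,Q)^{\ast}$ are assumed in the statement (the variants with $P$ replaced by $Q^{\delta_{n}^{k}}$, needed in Step~2 of the proof of Theorem~\ref{theo:distance_density}, follow from the assumed ones by the Lindeberg telescoping carried out in Step~1). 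Hence all that has to be checked is: (\ref{hyp:transport_regularite_semigroup}) and (\ref{hyp:transport_regularite_semigroup_dual}) for $P$, and the regularization hypothesis $\overline{R}_{n,\nu,\eta}(Q)$ — of which item~\ref{reg_property_mes_cont_cauchy_1} is exactly the assumed transport bounds, so the real content is item~\ref{reg_property_mes_cont_cauchy_3}, i.e.\ the existence of a modification of $Q^{\delta_{n}^{k}}$ that is at once regularizing and close in total variation.

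For $P$ itself this is classical: by $\mathbf{A}_{1}^{\delta}(+\infty)$ evaluated at $z=0$, $y=0$ the vector fields $\bar V_{0},V_{1},\dots,V_{N}$ lie in $\mathcal{C}^{\infty}_{b}$, so $P$ is the semigroup of an SDE with smooth bounded coefficients; then (\ref{hyp:transport_regularite_semigroup}) follows by differentiating the stochastic flow $x\mapsto\Phi_{s,t}(x)$ in the initial condition and using the standard $L^{p}$ bounds on the derivative processes, while (\ref{hyp:transport_regularite_semigroup_dual}) follows from the representation $P^{\ast}_{s,t}f(y)=\mathbb{E}[f(\Phi_{s,t}^{-1}(y))\,|\det D\Phi_{s,t}^{-1}(y)|]$: a change of variables gives $\Vert P^{\ast}_{s,t}f\Vert_{1}\leqslant\Vert f\Vert_{1}$, and differentiating under the expectation with the same flow-derivative moment bounds gives the estimate on $\Vert P^{\ast}_{s,t}f\Vert_{r,1}$, uniformly for $0\leqslant s\leqslant t\leqslant T$.

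The heart of the argument is item~\ref{reg_property_mes_cont_cauchy_3}. First I would note that all the structural hypotheses transfer from $\delta_{n}^{1}$ to every $\delta_{n}^{k}$, $k\geqslant 1$: $\mathbf{A}_{2}^{\infty}(L)$ does not involve $\delta$; the right-hand side of (\ref{eq:hyp_1_Norme_adhoc_fonction_schema}) is nondecreasing as $\delta\downarrow 0$, so $\mathbf{A}_{1}^{\delta_{n}^{1}}(+\infty)$ forces $\mathbf{A}_{1}^{\delta_{n}^{k}}(+\infty)$; $\mathbf{A}_{3}^{\delta_{n}^{k}}(+\infty)$ and $\mathbf{A}_{4}^{\delta_{n}^{k}}$ hold with the same constants because the noise variables share a common law across the grids; and $\eta_{1}(\delta),\eta_{2}(\delta)$ of (\ref{def:eta_delta}) are nondecreasing as $\delta\downarrow 0$, so \ref{hyp:hyp_5_loc_var} for $\delta_{n}^{1}$ (valid once $n$ is large) propagates to all $\delta_{n}^{k}$. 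Then fix $\theta\geqslant\nu+m(0,\nu)$ and take $\overline{Q}^{\delta_{n}^{k}}:=Q^{\delta_{n}^{k},\theta}$, the Gaussian-perturbed operator defined above. Proposition~\ref{th:regul_main_result_intro}, part~\ref{th:reg_gauss_regprop_intro}, applied with $q=q_{\nu}$, gives $\Vert Q^{\delta_{n}^{k},\theta}_{s,t}\partial^{\gamma}_{x}f\Vert_{r,\infty}\leqslant C\exp(C(t-s))\,|(t-s)\mathcal{V}_{L}^{\infty}|^{-\eta}\Vert f\Vert_{\infty}$ for $|\gamma|+r\leqslant q_{\nu}$, which for $t-s\leqslant T$ is $R_{q_{\nu},\eta}(\overline{Q}^{\delta_{n}^{k}})$; and part~\ref{th:reg_gauss_distprop_intro} gives $\Vert Q^{\delta_{n}^{k}}_{s,t}f-Q^{\delta_{n}^{k},\theta}_{s,t}f\Vert_{\infty}\leqslant(\delta_{n}^{k})^{\theta}C\exp(C(t-s))\,|(t-s)\mathcal{V}_{L}^{\infty}|^{-\eta}\Vert f\Vert_{\infty}$, where $(\delta_{n}^{k})^{\theta}=T^{\theta}n^{-k\theta}\leqslant T^{\theta}n^{-(\nu+m(0,\nu))}$ turns this into exactly (\ref{hyp:approx_distance_total_variation_loc_theorem}). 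Since $Q^{\delta}_{s,t}$ and $Q^{\delta,\theta}_{s,t}$ are both probability kernels, the last bound passes from $\mathcal{C}^{\infty}_{b}$ to $\mathcal{M}_{b}(\mathbb{R}^{d})$ by the Lusin-type argument recalled in Section~\ref{Section:The distance between two Markov semigroups}, and the constants are uniform in $k$ because the data entering Proposition~\ref{th:regul_main_result_intro} ($\mathfrak{D}$, $\mathfrak{p}$, $1/m_{\ast}$, $1/r_{\ast}$, $\theta$, and the moments of $Z$) are.

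With all hypotheses in place, Theorem~\ref{theo:distance_density} gives $\Vert P_{T}f-\hat{Q}^{\nu,\delta_{n}^{0}}_{0,T}f\Vert_{\infty}\leqslant n^{-\nu}\Vert f\Vert_{\infty}\,C(1+T^{p})/T(\nu)^{\eta}$ for every $f\in\mathcal{M}_{b}(\mathbb{R}^{d})$; keeping track of the explicit $\mathcal{V}_{L}^{\infty}$- and $\exp(C(t-s))$-dependence of Proposition~\ref{th:regul_main_result_intro} through that proof (all time increments there being $\leqslant T$) rewrites the constant in the form $C\exp(CT)/(\mathcal{V}_{L}^{\infty}T(\nu))^{\eta}$ with $C,\eta$ independent of $n$ and $f$, which is (\ref{eq:distance_convergence_total_variation}). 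I expect the genuinely delicate step to be item~\ref{reg_property_mes_cont_cauchy_3}: exhibiting one modification that is simultaneously regularizing (the $R_{q_{\nu},\eta}$ bound) and $o(n^{-(\nu+m(0,\nu))})$-close, and doing so uniformly over the infinitely many grids $\delta_{n}^{k}$ — this is precisely what forces the two-part structure of Proposition~\ref{th:regul_main_result_intro} and the monotonicity check on $\eta_{1},\eta_{2}$; the flow estimates for $P$ and the propagation of constants through Theorem~\ref{theo:distance_density} are routine.
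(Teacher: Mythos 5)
Your proposal is correct and follows essentially the same route as the paper: the paper's proof likewise takes the Gaussian-convolved modification $Q^{\delta,\theta}$ with $\theta\geqslant\nu+m(0,\nu)$, invokes Proposition~\ref{th:regul_main_result_intro} for the regularization property and the total variation closeness, and concludes by Theorem~\ref{theo:distance_density}. The only difference is that you spell out the routine verifications the paper leaves implicit (the transport bounds for $P$, the transfer of $\mathbf{A}_{1}$--$\mathbf{A}_{5}$ from $\delta_{n}^{1}$ to $\delta_{n}^{k}$, and the uniformity of constants in $k$), which is consistent with the paper's argument.
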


\begin{proof}
We have proved in Proposition \ref{th:regul_main_result_intro} that $Q^{\delta,\theta}$ verifies the regularization property. We chose $\theta \geqslant \nu+m(0,\nu)$ and the proof of (\ref{eq:distance_convergence_total_variation}) is then an immediate consequence  of Theorem \ref{theo:distance_density}. 
\end{proof}

\begin{example}

Let us consider $X=(X^{1},X^{2})$, the solution of the 2-dimensional system of $\mathbb{R}$ valued SDE, starting at point $x_{0}=(x^{1}_{0},x^{2}_{0}) \in \mathbb{R}^{2}$ and given by
\begin{align*}
d X^{1}_{t} =& b(X^{1}_{t},t) \mbox{d} t +  \sigma(X^{1}_{t},t)  \mbox{d} W_{t} \\
d X^{2}_{t} =& X^{1}_{t} \mbox{d} t  
\end{align*}
where $(W_{t})_{t \geqslant 0}$ is a one dimensional standard Brownian motion, $b$ and $\sigma$ continuous and bounded and their derivatives of any order are also continuous and bounded.  In the setting from (\ref{eq:SDE_inv}), we have $V_{0}:(x,t) \mapsto (b(x^{1},t),x^{1})$ and $V_{1}:(x,t) \mapsto (\sigma(x^{1},t),0)$. In this example uniform ellipticity holds for $X^{1}$ as long as $\inf_{(x^{1},t)\in \mathbb{R} \times \mathbb{R}_{+}} \sigma(x^{1},t)^{2} \neq 0$. However ellipticity does not hold for $X$ since $\mbox{dim} ( \mbox{span} ((\sigma,0)))(x,t) \leqslant 1<2$ for any $(x,t) \in  \mathbb{R}^{2} \times \mathbb{R}_{+}$. Nevertheless, let us compute the Lie brackets. In particular 
\begin{align*}
[V_{0},V_{1}]:(x,t) \mapsto (\partial_{x^{1}} \sigma (x^{1},t) b (x^{1},t)- \partial_{x^{1}} b(x^{1},t) \sigma(x^{1},t) , -\sigma(x^{1},t) 
),
\end{align*}
and,  for $\sigma(x^{1},t) \neq 0$, $\mbox{span} ((\sigma,0),(\partial_{x^{1}} \sigma  b - \partial_{x^{1}} b \sigma+\partial_{t} \sigma , - \sigma )(x,t) = \mathbb{R}^{2}$ so that local weak H\"ormander condition holds. Now, let us consider the Euler scheme of 
$X$, given by $(X^{\delta,1}_{0} ,X^{\delta,2}_{0} )=x_{0}$ and  for $t \in \pi^{\delta}$,
\begin{align*}
X^{\delta,1}_{t+\delta} =& X^{\delta,1}_{t} +b(X^{\delta,1}_{t},t) \delta +  \sigma(X^{1}_{t},t) \sqrt{\delta} Z^{\delta}_{t+\delta}\\
X^{\delta,2}_{t+\delta} =& X^{\delta,2}_{t}+ X^{\delta,1}_{t} \delta ,
\end{align*}
where $Z^{\delta}_{t} \in \mathbb{R}$, $t \in \pi^{\delta,\ast} $, are centered with variance one and Lebesgue lower bounded distribution and moment of order three equal to zero. With notations introduced in (\ref{def:hormander_func}), for $\sigma(x^{1},t) \neq 0$,
\begin{align*}
&\mathcal{V}_{1}(x,t) \\
&=  1 \wedge \inf_{\mathbf{b} \in \mathbb{R}^{d}, \vert \mathbf{b} \vert_{\mathbb{R}^{d}} =1}  \langle V_{1}(x,t) , \mathbf{b} \rangle_{\mathbb{R}^{d}}^{2} + \langle [V_{0}- \frac{1}{2} \nabla_{x}V_{1} V_{1},V_{1}](x,t) +\partial_{t}V_{1}(x,t) , \mathbf{b} \rangle_{\mathbb{R}^{d}}^{2}  \\
&= 1 \wedge \inf_{\mathbf{b} \in \mathbb{R}^{d}, \vert \mathbf{b} \vert_{\mathbb{R}^{d}} =1}  \langle (\sigma,0) , \mathbf{b} \rangle_{\mathbb{R}^{d}}^{2} + \langle (\partial_{x^{1}} \sigma b-\partial_{x^{1}} b \sigma+\frac{1}{2} \sigma^{2} \partial_{x^{1}}^{2}  \sigma+\partial_{t}\sigma,-\sigma) , \mathbf{b} \rangle_{\mathbb{R}^{d}}^{2}(x^{1},t) \\
&>0 .
\end{align*}

For a fixed time step $\delta>0$, we introduce 
\begin{align*}
\forall s,t \in \pi^{\delta}, s \leqslant t, \qquad  Q^{\delta}_{s,t}f(x) :=  \mathbb{E}[f(X^{\delta}_{t}) \vert X^{\delta}_{s}=x] .
  \end{align*}
Now, given $T>0$, $n\in \mathbb{N}^{\ast}$, $\nu>0$, we build $\hat{Q}^{\nu,\delta_{n}^{0}}_{0,T}$ using (\ref{eq:schema_ordre_quelconque}) and for $n$ large enough, for every $f \in \mathcal{M}_{b}(\mathbb{R}^{d})$, we have
\begin{align*}
\vert \mathbb{E}[f(X_{T}) \vert X_{0}=x ]-\hat{Q}^{\nu,\delta_{n}^{0}}_{0,T} \vert  \leqslant & \frac{1}{n^{\nu}} \Vert f \Vert_{\infty} \frac{  C\exp(C T) }{\vert  \mathcal{V}_{1}^{\infty} T(\nu) \vert^{\eta}}   .
\end{align*}
where $C$ and $\eta$ do not depend on $n$ or $f$.
\end{example}

\bibliography{Biblio}
\bibliographystyle{plain}

\end{document}